\documentclass[11pt]{article}
\usepackage[T1]{fontenc}
\usepackage{amsmath,amsthm,amssymb,mathrsfs,mathtools}
\usepackage{enumerate}
\usepackage{graphicx,booktabs,microtype,xcolor,tikz,placeins}
\usetikzlibrary{arrows.meta,calc}
\usepackage[breaklinks,colorlinks,linkcolor=black,citecolor=blue,urlcolor=blue]{hyperref}
\usepackage{nameref}
\usepackage[nameinlink,noabbrev]{cleveref}
\usepackage[margin=27mm]{geometry}

\newtheorem{theorem}{Theorem}[section]
\newtheorem{lemma}[theorem]{Lemma}
\newtheorem{corollary}[theorem]{Corollary}
\newtheorem{proposition}[theorem]{Proposition}
\theoremstyle{definition}

\theoremstyle{remark}

\numberwithin{equation}{section}
\allowdisplaybreaks[2]

\newcommand{\R}{\mathbb{R}}
\newcommand{\dd}{\mathop{}\!\mathrm d}
\newcommand{\II}{\mathrm{II}}
\newcommand{\Ric}{\operatorname{Ric}}

\newcommand{\diver}{\operatorname{div}}

\begin{document}

\title{Counterexamples to Wang's Conjecture}

\author{
Ruiqi Jiang
\footnote{School of Mathematics, Hunan University, Changsha 410082, China (jiangruiqi@hnu.edu.cn).}
\qquad \and
Linlin Sun
\footnote{School of Mathematics and Computational Science, Xiangtan University, Xiangtan 411105, China (sunll@xtu.edu.cn)}
}

\date{}
\maketitle

\begin{abstract}
In this paper, we give a negative answer to Wang's conjecture. For every $n\geq 3$, there exist $\varepsilon>0$ and $\delta \in (0,1)$ such that, for all $q$ and $\lambda$ satisfying
\[
\frac{n}{n-2}-\varepsilon<q\le \frac{n}{n-2},
\qquad
\frac{1}{q-1}\cdot \delta<\lambda\le \frac{1}{q-1},
\]
there exists a bounded Euclidean domain \(\Omega\subset\mathbb{R}^n\) with principal curvatures bigger than \(1\) for which the following nonlinear Robin problem admits a nonconstant positive solution:
\begin{align*}
	\begin{cases}
		\Delta u=0, & \text{in}\ \Omega,\\[2mm]
		\dfrac{\partial u}{\partial\nu}+\lambda u=u^q, & \text{on}\ \partial\Omega.
	\end{cases}
\end{align*}
\end{abstract}



\section{Introduction and main results}
Let $(M^n,g)$ be a compact connected Riemannian manifold of dimension $n$ with smooth boundary,
and let $\nu$ be the outward unit normal.  We consider positive solutions of
\begin{equation}\label{eq:wang-problem-M}
	\begin{cases}
		\Delta_g u=0,&\text{in }M, \\
		\partial_\nu u+\lambda u=u^q,&\text{on }\partial M,
	\end{cases}	
\end{equation}
where 
\begin{align}\label{ineq:condition-q-lambda}
	1<q\leq \frac n{n-2}, \qquad 0<\lambda\le\frac1{q-1}.
\end{align}
Wang (\cite[Conjecture~1]{Wang2021}) conjectured that if
\begin{equation}\label{eq:wang-assumptions}
	\Ric_g(M)\geq 0,\qquad h\geq g|_{\partial M},	
\end{equation}
where $h$ is the second fundamental form of $\partial M$ defined by $h(X,Y)=g(\nabla_X \nu,Y)$, $\forall X,Y\in \mathfrak{X}(\partial M)$, then any positive solution $u$ of \eqref{eq:wang-problem-M} must be constant $\lambda^{1/(q-1)}$ unless $q=\frac{n}{n-2}$ and $\lambda=\frac{1}{q-1}=\frac{n-2}{2}$, $M$ is isometric to closed unit ball $\overline{B^n}$ in $\mathbb{R}^n$ and $u$ takes the form
\begin{align}
	u_\xi(x)=\left[\frac{2}{n-2}\,\frac{1-|\xi|^2}{1+|\xi|^2|x|^2-2x\cdot \xi}\right]^{(n-2)/2}
\end{align}
for some $\xi \in B^n$.

For $n=2$, Wang's conjecture has been completely proved in \cite{GuoHangWang2021}. For $n\geq 3$ and $M=\overline{B^n}$, Guo and Wang (\cite{GuoWang2020}) comfirmed the uniqueness for $q\in (1,\frac{n}{n-2})$ and $\lambda \in (0,\frac{n-2}{2})$, and after some Liouville results (\cite{LinOu2023,Ou2024}), then Gu and Li (\cite{GuLi2025}) completely proved it for the full range \eqref{ineq:condition-q-lambda}. On general Riemannian manifolds, Cai (\cite{Cai2026}) only obtained some partial positive results under $\mathrm{Ric}_g(M)\geq 0$, while Guo-Hang-Wang (\cite{GuoHangWang2021}) verified some cases under $\mathrm{Sec}_g(M)\geq 0$. 

Throughout the paper, we take
\begin{align*}
	a=\frac{n-2}{2},\quad q_*=\frac{n}{n-2},
\end{align*}
and denote by $\sigma_1(M)>0$ the first positive Steklov eigenvalue of $M$, unless otherwise stated.

One can check that Wang's conjecture cannot hold for general Riemannian domains. In fact, there is a deep relationship between Wang's conjecture and the Steklov eigenvalue, as follows. For $q\in (1,q_*]$ and $\lambda \in (0,\frac{1}{q-1}]$, set
\begin{align}\label{eq:Q-functional}
	Q_{q,\lambda}(u)=\frac{\displaystyle\int_M |\nabla u|^2\,\mathrm{d}V+\lambda \int_{\partial M} u^2\,\mathrm{d}A}{\left(\displaystyle\int_{\partial M}|u|^{q+1}\,\mathrm{d}A \right)^{2/(q+1)}},
\end{align}
where $dV$ and $dA$ are volume form of $M$ and $\partial M$ respectively. After multiplication by a positive constant, each critical points of $Q_{q,\lambda}$ on $H^1(\Omega)$ satisfies \eqref{eq:wang-problem-M} weakly. 

Let $c>0$ be a constant and take $\phi\in C^1(M)$ with $\int_{\partial M} \phi \,dA=0$. Define
$$
F(s)=\frac{Q_{q,\lambda}(c+s\phi)}{Q_{q,\lambda}(c)},
$$
then
\begin{align}\label{eq:log-Fs}
	\log F(s)=\frac{s^2}{\lambda c^2|\partial M|}
	\left( \int_M |\nabla\phi|^2\,\mathrm{d}V-\lambda(q-1)\int_{\partial M}\phi^2\,\mathrm{d}A \right)
	+O(s^3).
\end{align}
If $\phi_1$ is a Steklov eigenfunction with respect to $\sigma_1(M)$, then we normolize $\phi_1$ by $\phi_*:=\phi_1 - \overline{\phi_1}$ where $\overline{\phi_1}:=\frac{1}{|\partial M|} \int_{\partial M} \phi_1 \,dA$,
such that $\int_{\partial M}\phi_* \,dA=0$, and obtain
\begin{align*}
	\int_M |\nabla \phi_*|^2\,dV=\sigma_1(M) \int_{\partial M} \phi_*^2 dA.
\end{align*}
Substitute the above equality into \eqref{eq:log-Fs}, we have
\begin{align*}
	\log F(s)=\frac{s^2}{\lambda c^2|\partial M|} \bigg(\sigma_1(M)-\lambda(q-1)\bigg) 
	\int_{\partial M}\phi_*^2\,\mathrm{d}A
	+O(s^3).
\end{align*}
Hence, if $\sigma_1(M)<\lambda(q-1)\leq 1$, then for any positive constant $c$, there holds  $\log F(s)<0$, i.e., 
\begin{align*}
	Q_{q,\lambda}(c+s\phi_*)<Q_{q,\lambda}(c)
\end{align*}
for sufficiently small $s$, which implies the constant solution of \eqref{eq:wang-problem-M} is not a local minimizer. It follows that the minimizers of \eqref{eq:Q-functional} subject to $\int_{\partial M} |u|^{q+1}\, dA=1$ for $q\in (1,\frac{n}{n-2})$ is not a constant. Note that compactness of the trace embedding $H^1(M) \hookrightarrow L^{q+1}(\partial M)$ for $q\in (1,\frac{n}{n-2})$ and $\lambda >0$ guarantee the existence of a nonnegative minimizer $u$. By the classic elliptic estimates and strong maximum principle, we can obtain $u\in C^\infty(M)$ and $u>0$ in $M$. Therefore we get the following result:
\begin{proposition}\label{prop:sigma-1-less-1}
	If $\sigma_1(M)<1$, then for every
	\begin{align}\label{ineq:Ricci-counterexample-p-lambda}
		1<q<\frac{n}{n-2},\quad \frac{\sigma_1(M)}{q-1}<\lambda\leq \frac{1}{q-1},
	\end{align}
	then problem \eqref{eq:wang-problem-M} has a positive nonconstant smooth solution.	
\end{proposition}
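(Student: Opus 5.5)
The plan is a direct variational argument: minimize $Q_{q,\lambda}$, show constants cannot be minimizers by a second-variation computation, and upgrade the minimizer to a smooth positive solution. Fix $q\in(1,q_*)$ and $\lambda$ in the range \eqref{ineq:Ricci-counterexample-p-lambda}. Minimize $Q_{q,\lambda}$ over $H^1(M)\setminus\{0\}$, or equivalently minimize $\int_M|\nabla u|^2\,dV+\lambda\int_{\partial M}u^2\,dA$ subject to $\int_{\partial M}|u|^{q+1}\,dA=1$.

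Existence goes as follows. Since $\lambda>0$, the numerator is equivalent to the $H^1(M)$ norm squared; this is a standard Friedrichs/Poincaré-type inequality, argued by contradiction with compactness of $H^1\hookrightarrow L^2(M)$ and of the trace into $L^2(\partial M)$. A minimizing sequence is therefore bounded in $H^1$. Because $q+1<\frac{2(n-1)}{n-2}$, the trace embedding $H^1(M)\hookrightarrow L^{q+1}(\partial M)$ is compact, so a weak limit keeps the constraint, and weak lower semicontinuity of the numerator gives a minimizer $u$. Replacing $u$ by $|u|$ leaves $Q_{q,\lambda}$ unchanged, so we may take $u\ge0$.

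The Euler–Lagrange equation reads $\Delta u=0$ in $M$ and $\partial_\nu u+\lambda u=\mu u^q$ on $\partial M$, with $\mu=\min Q_{q,\lambda}>0$. Rescaling $u\mapsto \mu^{1/(q-1)}u$ gives a weak solution of \eqref{eq:wang-problem-M}. For regularity, the subcritical exponent allows a Moser iteration (or the Cherrier-type boundary $L^p$ bootstrap) to give $u\in L^\infty$. Schauder estimates for the oblique/Robin problem then give $u\in C^{1,\alpha}$, and bootstrapping gives $C^\infty$, since $u^q$ is smooth wherever $u>0$ and is in any case $C^{1}$ once $u$ is. For positivity, $u\ge0$ is harmonic and $u\not\equiv0$, so the strong maximum principle gives $u>0$ in the interior. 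If $u(x_0)=0$ at some $x_0\in\partial M$, Hopf's lemma gives $\partial_\nu u(x_0)<0$, whereas the boundary condition gives $\partial_\nu u(x_0)=0$, a contradiction. Hence $u>0$ on $M$.

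Nonconstancy is where the hypothesis $\sigma_1(M)<\lambda(q-1)$ enters. Suppose the rescaled minimizer were constant; then it equals $c=\lambda^{1/(q-1)}$. Use \eqref{eq:log-Fs} with $\phi=\phi_*$, the mean-zero Steklov eigenfunction. Since $\sigma_1(M)<\lambda(q-1)$, we get $Q_{q,\lambda}(c+s\phi_*)<Q_{q,\lambda}(c)$ for small $s\neq0$, contradicting minimality. The step I expect to need the most care is justifying the expansion \eqref{eq:log-Fs}, i.e.\ checking that the $O(s^3)$ remainder is uniform and that the linear terms vanish. Writing $N(s)=\lambda c^2|\partial M|+s^2\!\int|\nabla\phi|^2+\lambda s^2\!\int\phi^2$ (the linear term drops since $\int_{\partial M}\phi=0$ and $\nabla c=0$), and $D(s)=\int_{\partial M}(c+s\phi)^{q+1}$, Taylor expansion gives $D(s)=c^{q+1}|\partial M|+\frac{q(q+1)}2c^{q-1}s^2\int\phi^2+O(s^3)$, valid because $\phi$ is bounded and $c>0$. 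Then
\[
\log N(s)-\tfrac{2}{q+1}\log D(s)-\text{(value at } s=0)=\frac{s^2}{\lambda c^2|\partial M|}\Big(\int|\nabla\phi|^2+\lambda\int\phi^2-\lambda q\int\phi^2\Big)+O(s^3),
\]
which is exactly the stated formula. The argument works for any minimizer scaling, so no constant can be a minimizer, and the smooth positive solution constructed above is nonconstant.
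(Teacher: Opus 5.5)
Your proposal is correct and follows the same route as the paper. You minimize $Q_{q,\lambda}$ using the compact trace embedding $H^1(M)\hookrightarrow L^{q+1}(\partial M)$, upgrade the nonnegative minimizer to a smooth positive solution by elliptic regularity and the maximum principle, and rule out constants with the expansion \eqref{eq:log-Fs} along the mean-zero Steklov eigenfunction, which you verify correctly. One small ordering point: place the $C^\infty$ bootstrap after your Hopf-lemma positivity step, which needs only $C^{1,\alpha}$, because smoothness of $u^q$ depends on $u>0$.
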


For $n\geq 3$, Sun-Wang-Wang (\cite{SunWangWang2026}) showed the existence of Riemannian metric $g$ on $\overline{B^n}$ such that $(\overline{B^n},g)$ has positive Ricci curvature, all principal curvatures of $\partial M$ are strictly larger than $1$, and $\sigma_1(M)<1$ (see also \cite{GirouardHeliere2026} for $n=3$). Hence, it follows from Proposition \ref{prop:sigma-1-less-1} that the counterexample of Wang's conjecture under nonnegative Ricci curvature exists.
\begin{theorem}\label{thm:ricci-main}
	For $n\geq 3$, there exist $(M^n,g)$ with positive Ricci curvature, all principal curvatures of $\partial M$ are strictly larger than $1$, and $\sigma_1(M)<1$ such that problem \eqref{eq:wang-problem-M} satisfying \eqref{ineq:Ricci-counterexample-p-lambda} has a positive nonconstant smooth solution on $M$.
\end{theorem}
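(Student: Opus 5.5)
The plan is to combine the Sun--Wang--Wang construction with Proposition~\ref{prop:sigma-1-less-1}, so almost all the work sits in that proposition.

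First, for $n\ge 3$ we invoke \cite{SunWangWang2026}, which gives a metric $g$ on $M=\overline{B^n}$ with $\Ric_g>0$, all principal curvatures of $\partial M$ strictly larger than $1$, and $\sigma_1(M)<1$. Since $\sigma_1(M)<1$, the parameter range \eqref{ineq:Ricci-counterexample-p-lambda} is nonempty: for each $q\in(1,\frac{n}{n-2})$ the interval $\bigl(\frac{\sigma_1(M)}{q-1},\frac{1}{q-1}\bigr]$ is nondegenerate.

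Second, we apply Proposition~\ref{prop:sigma-1-less-1} to this $M$, so it is worth making its proof explicit. Fix $q$ and $\lambda$ in the range \eqref{ineq:Ricci-counterexample-p-lambda}, and minimize $\int_M|\nabla u|^2+\lambda\int_{\partial M}u^2$ over $u\in H^1(M)$ with $\int_{\partial M}|u|^{q+1}=1$. The admissible values of $q$ are strictly subcritical, so the trace embedding $H^1(M)\hookrightarrow L^{q+1}(\partial M)$ is compact. Because $\lambda>0$, the functional is coercive: $\|\nabla u\|_{L^2(M)}^2+\|u\|_{L^2(\partial M)}^2$ controls $\|u\|_{H^1(M)}^2$. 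Hence a minimizing sequence has a subsequence converging weakly in $H^1$ and strongly in $L^{q+1}(\partial M)$, and the limit is a minimizer. Replacing $u$ by $|u|$ keeps it a minimizer, so we may take $u\ge0$.

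The Euler--Lagrange equation is $\Delta u=0$ in $M$ and $\partial_\nu u+\lambda u=\mu u^q$ on $\partial M$, where $\mu$ equals the minimum value and is positive. Rescaling $u$ by $\mu^{1/(q-1)}$ yields a solution of \eqref{eq:wang-problem-M}.
\begin{itemize}
\item \emph{Regularity.} Standard elliptic estimates for the nonlinear Robin problem with subcritical growth (Moser iteration, then Schauder estimates) give $u\in C^\infty(M)$.
\item \emph{Positivity.} The strong maximum principle gives $u>0$ in the interior. If $u(x_0)=0$ at some $x_0\in\partial M$, Hopf's lemma forces $\partial_\nu u(x_0)<0$, while the boundary condition gives $\partial_\nu u(x_0)=0$; this is a contradiction, so $u>0$ on all of $M$.
\end{itemize}

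Finally, we show the minimizer is nonconstant using \eqref{eq:log-Fs}. Take $\phi_*$, the first Steklov eigenfunction normalized to have zero boundary mean. Then for any constant $c>0$ and small $s\ne0$,
\[
Q_{q,\lambda}(c+s\phi_*)<Q_{q,\lambda}(c),
\]
because $\sigma_1(M)-\lambda(q-1)<0$. So no constant is a minimizer of $Q_{q,\lambda}$, and the positive solution constructed above is nonconstant.

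The only nontrivial point I expect is the justification of \eqref{eq:log-Fs}, specifically its denominator. Expanding $\int_{\partial M}(c+s\phi)^{q+1}$ to second order, the linear term vanishes since $\int_{\partial M}\phi=0$. The quadratic contributions combine to
\[
\frac{s^2}{c^2|\partial M|}\Bigl(\tfrac{1}{\lambda}\int_M|\nabla\phi|^2+\int_{\partial M}\phi^2-q\int_{\partial M}\phi^2\Bigr).
\]
This is exactly the stated expression, and the proof is complete.
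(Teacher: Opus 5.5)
Your proposal is correct and follows the paper's own route: you cite Sun--Wang--Wang for the metric on $\overline{B^n}$ and then run the proof of Proposition~\ref{prop:sigma-1-less-1} (subcritical minimization, the second-variation test \eqref{eq:log-Fs} along the Steklov eigenfunction, regularity and the maximum principle), and your check of the quadratic term in \eqref{eq:log-Fs} is right. One small point of ordering: establish $C^{1,\alpha}$ regularity first, then positivity via Hopf, and only then bootstrap to $C^\infty$, because $t\mapsto t^q$ is not smooth at $t=0$.
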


Although the Escobar conjecture was disproved in the general Riemannian case, the Euclidean case was proved by Xia and Xiong \cite{XiaXiong2024}. Namely, they proved that \(\sigma_1(M)\ge 1\) for a Euclidean domain whose principal curvatures satisfy \(\kappa_i\ge 1\). As mentioned above, Gu and Li \cite{GuLi2025} proved that Wang's conjecture is true for the Euclidean ball. A natural question is the following:

\begin{quote}
	Does Wang's conjecture hold true for Euclidean convex domains?
\end{quote}

In this paper, we give a negative answer to this question. More precisely, we give negative answers to Wang's conjecture for \((q,\lambda)\) near the critical case \(\left(\frac{n}{n-2}, \frac{n-2}{2}\right)\). That is, we have the following:

\begin{theorem}\label{thm:flat-main}
	For $n\ge3$, there exist $\varepsilon>0$ and $\delta\in (0,1)$ such that for every
	$q_*-\varepsilon<q\leq q_*$ and $\frac{1}{q-1}\cdot\delta <\lambda \leq \frac{1}{q-1}$, there exist a bounded real-analytic strictly
	convex domain $\Omega_*\subset\R^n$ who admits $\sigma_1(M)>1$ and all principal curvatures
	\begin{align*}
		\min_{x\in\partial\Omega,\,1\le i\le n-1}\kappa_i(x)>1,
	\end{align*}
	and nonconstant positive function
	$u\in C^\infty(\overline{\Omega_*})$ satisfying
	\begin{equation}\label{eq:wang-problem-Omega}
		\begin{cases}
			\Delta u=0\quad\text{in } \Omega_*, \\
			\partial_\nu u+\lambda u=u^q\quad\text{on }\partial \Omega_*.
		\end{cases}	
	\end{equation}
\end{theorem}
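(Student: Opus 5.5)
The idea is to perturb the unit ball near the critical pair $(q_*,a)$ and compare energies. I would show that on a suitable perturbed domain the infimum of $Q_{q,\lambda}$ is strictly below $Q_{q,\lambda}$ of the constants, using the degenerate conformal family $u_\xi$ as test functions. Proposition~\ref{prop:sigma-1-less-1} cannot be used, since Xia--Xiong forces $\sigma_1\ge1$ on Euclidean domains with $\kappa_i\ge1$. The mechanism therefore has to be a global energy comparison rather than loss of local minimality. Any minimizer with energy below the constants is necessarily nonconstant.

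\textbf{Step 1 (the ball at the critical point).} On $\overline{B^n}$ with $q=q_*$ and $\lambda=a$, every $u_\xi$ solves \eqref{eq:wang-problem-Omega}. By the sharp Escobar trace inequality, all of them, the constant $u_0$ included, realise the same value $S_*$ of $Q_{q_*,a}$. Here $S_*$ is the sharp half-space trace--Sobolev constant, i.e.\ the compactness threshold. Scaling alone does not help: on $B_r$ a solution with parameter $\lambda$ corresponds to parameter $\lambda r$ on $B_1$, which lies in Gu--Li's uniqueness range. So the domain must be genuinely non-spherical.

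\textbf{Step 2 (deformation keeping $\kappa_i>1$).} I would set
\[
\partial\Omega_t=\bigl\{(1-t^2\beta)\bigl(1+t\rho(\theta)+t^2\rho_2(\theta)\bigr)\theta:\ \theta\in S^{n-1}\bigr\}.
\]
Here $\rho$ is a fixed non-trivial spherical harmonic, chosen for instance among those of degree $2$ or $3$ so that it is not generated by translations. The principal curvatures are $1+t\,A(\rho)+O(t^2)$, where $A(\rho)$ is the matrix $-\nabla^2\rho-\rho\,g$. Its trace has zero mean, so this first-order term cannot be positive everywhere, and that is why second order is needed.

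I would choose $\rho$ small relative to the scale $t$, with $\rho_2$ and $\beta>0$ tuned, so that the $t^2\beta$ shrinkage dominates and all curvatures exceed $1$. The deformed domain stays real-analytic and strictly convex. The bound $\sigma_1>1$ should follow by continuity of $\sigma_1$ away from the degree-one eigenspace, or directly from the curvature condition together with some strict-inequality refinement.

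\textbf{Step 3 (energy expansion).} I would transplant $u_\xi$ to $\Omega_t$ and expand
\[
\Phi_t(\xi)=Q_{q_*,a}(u_\xi;\Omega_t)-Q_{q_*,a}(\mathrm{const};\Omega_t)
\]
to the relevant order in $t$, using Hadamard-type domain-variation formulas. On the ball this difference vanishes identically in $\xi$. The leading correction is a quadratic functional of $(\rho,\beta)$ that depends non-trivially on $\xi$; in effect it is the reduced energy of a Lyapunov--Schmidt reduction on the conformal orbit. The aim is to show that $\min_\xi\Phi_t<0$ for the chosen $\rho$. Since the constant's energy is at most $S_*$ up to $O(t^2)$ and the comparison is made against it, this also gives $\inf Q<S_*$ (after possibly using the shrinkage sign).

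I expect this to be the main obstacle. It requires explicit integrals of $u_\xi$ against spherical harmonics, and one must check that the curvature constraint in Step 2 does not force the reduced energy to be non-negative.

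\textbf{Step 4 (perturbing $q$ and $\lambda$).} I would fix $t$, and with it $\Omega_*$. The strict inequality $\inf Q_{q_*,a}<\min\{S_*,\ Q(\mathrm{const})\}$ is an open condition. By continuity of both sides in $(q,\lambda)$, it persists for $q_*-\varepsilon<q\le q_*$ and $\delta/(q-1)<\lambda\le 1/(q-1)$.

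For $q<q_*$, the compact trace embedding yields a minimizer. For $q=q_*$, the bound $\inf Q<S_*$ together with the standard concentration-compactness argument does the same. Elliptic regularity and the strong maximum principle make the minimizer smooth and positive, and it is nonconstant because its energy is strictly below that of the constants.
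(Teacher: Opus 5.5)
Your Step 3 carries the whole existence argument, but it is only stated as an aim. Nothing in the proposal shows that $\min_\xi\Phi_t<0$ is attainable on a domain with $\kappa_i>1$, and the structure of the problem suggests the sign goes the wrong way.

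\begin{itemize}
\item \emph{What the curvature constraint allows.} In your Step 2 the shrinkage must dominate, so $t\rho$ is effectively of the same order as $t^2\beta$. Linearizing there, $\kappa_i\ge1$ for $\partial\Omega=\{(1+\epsilon\phi(\theta))\theta\}$ becomes $\nabla^2\phi+\phi g\le0$. That is, $\phi=-h_L$ with $h_L$ the support function of a convex body $L$. Every admissible deformation therefore contains a net shrinkage.
\item \emph{Shrinking favours constants.} Pure shrinking strictly favours the constant. On $B_r$, $r<1$, with $u_\xi$ rescaled to $B_r$, H\"older gives $Q_{q_*,a}(u_\xi)-Q_{q_*,a}(1)=a(1-r)\bigl(|\partial B^n|^{1/(n-1)}-\|u_\xi\|^2_{L^2(\partial B^n)}/\|u_\xi\|^2_{L^{q_*+1}(\partial B^n)}\bigr)>0$ for $\xi\neq0$.
\item \emph{Small $|\xi|$.} This range is excluded outright. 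Xia--Xiong plus scaling give $\sigma_1\ge\kappa_{\min}>1=a(q_*-1)$, so the constant is a strict local minimizer of $Q_{q_*,a}$ modulo scaling.
\item \emph{$|\xi|\to1$.} Here the energy tends to $S_*>Q_{q_*,a}(1)$.
\item \emph{Intermediate $\xi$.} The first variation of $\log Q(u_\xi)-\log Q(1)$ is $\epsilon\int_{S^{n-1}}\phi\,k_\xi$ for an explicit zonal density $k_\xi$.
  \begin{itemize}
  \item It vanishes when $L$ lies in a hyperplane orthogonal to $\xi$. This is the infinitesimal version of the paper's domain: unit balls centred on a sphere about the pole $\xi/|\xi|^2$. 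There an inversion fixing every face gives $Q(u_\xi)=Q(1)$ exactly.
  \item In the test case $n=3$, $|\xi|=\tfrac12$ (which is exactly the paper's configuration), my own Hadamard-formula computation, checked only at sample points, suggests the first variation is strictly positive for every other convex $L$.
  \end{itemize}
\end{itemize}
So the reduced energy you need to be negative appears to be nonnegative at leading order, with equality only in a non-smooth borderline configuration. This is exactly the risk you flagged, and the proposal does not resolve it. Steps 1, 2 and 4 are fine, but they are conditional on this sign.

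The missing idea is that no energy gap is needed. The paper works precisely at the equality configuration.
\begin{itemize}
\item On $\Omega=\bigcap_{j=0}^{n}B_1(c_j)$ with $|c_j|=2$, the function $u_*=(3a)^a|x|^{2-n}$ solves the critical problem exactly, because $|x|^2-2x\cdot\nu=3$ on every face.
\item The inversion $T(x)=3x/|x|^2$ preserves each face. The associated Kelvin transform maps $u_*$ to the constant $a^a$.
\item It also conjugates the linearization at $u_*$ to the Steklov problem with eigenvalue $1$. That eigenvalue is ruled out by $\sigma_1(\Omega)>1$, which comes from the weighted Reilly-type identity plus affine independence of the centres.
\item Nondegeneracy then lets a local Leray--Schauder degree argument transport $u_*$ to the real-analytic inner smoothings $\Omega_\tau$, which have $\kappa_i>\rho_\tau^{-1}>1$, and to $q$ near $q_*$ with $\lambda=1/(q-1)$.
\item A dilation by $r\in[1,\rho_\tau^{-1}]$ then covers $\lambda\in(\delta/(q-1),1/(q-1)]$.
\end{itemize}
These solutions are never claimed to be minimizers. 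That is how the argument avoids the energy comparison on which your plan depends.
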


The construction is based on an exact solution, not on a bifurcation from
the constant branch.
In Section \ref{sec:explicit-solution}, we obtain the initial Lipschitz convex domain $\Omega$ in $\mathbb{R}^n$ by intersecting $n+1$ unit balls whose centers lie on the
sphere of radius $2$;
on every spherical face of $\Omega$ the same Newtonian potential $(3a)^a |x|^{n-2}$ satisfies the critical Robin equation;
due to $\sigma_1(\Omega)>1$, Kelvin involution identifies the
linearized kernel at the Newtonian solution with the Steklov eigenspace at
eigenvalue $1$, and therefore proves its nondegeneracy.
In Section \ref{sec:solution-approximating-domain},  $\Omega$ is approximating by bounded real-analytic strictly convex domains $\Omega_\tau$ from the inside by a log-sum-exp defining function; 
after pulling the Robin problem on $\Omega_\tau$ back to $\Omega$, uniform boundary
H\"older estimates give compactness of the resolvents;  finally, we apply local
Leray--Schauder degree to obtain the nondegenerate solution on the $\Omega_\tau$ and nearby exponents.

\section{A Lipschitz domain and an explicit nonconstant solution}
\label{sec:explicit-solution}

In this section, we construct a strictly convex domain by intersecting $n+1$ unit balls whose centers lie on the sphere of radius $2$ centered at origin. 

We now specify the centers of $n+1$ unit balls in $\mathbb{R}^n$. Let 
$$
E=\left\{y\in\R^n:\sum_{i=1}^n y_i=0\right\}\cong\R^{n-1},
$$
and 
$$
\omega_j=\sqrt{\frac{n}{n-1}}\left(e_j-\frac{1}{n}(1,\dots,1)\right)\in E, \quad j=1,\cdots,n,
$$
then there holds 
\begin{align}\label{eqn:Flat-1-1}
	|\omega_j|=1, \quad \omega_i \cdot \omega_j =-\frac{1}{n-1}, \quad 1\leq i\neq j\leq n.	
\end{align}
Set $$\omega_0 =-\omega_1.$$ Then
\begin{align}\label{eqn:Flat-1-2}
	|\omega_0|=1, \quad \omega_0 \cdot \omega_1=-1, \quad \omega_0 \cdot \omega_j=\frac{1}{n-1}, \quad j=2,\cdots,n.
\end{align}
We identify $E\oplus \mathbb{R}$ as $\mathbb{R}^n$, and the $n+1$ centers of unit balls in $\mathbb{R}^n$ are
$$
\begin{aligned}
	c_0(\eta)&=\left((\eta+\eta^2)\omega_0,
	\sqrt{4-(\eta+\eta^2)^2}\right),\\
	c_j(\eta)&=\left(\eta\omega_j,\sqrt{4-\eta^2}\right),
	\qquad 1\le j\le n
\end{aligned}
$$
where $\eta\in (0,1)$. It follows that
\begin{align*}
	|c_j(\eta)|=2, \quad j=0,1,\cdots,n,
\end{align*}
and all centers converge to $2e_n$ as $\eta \rightarrow 0$.
Set
\begin{align*}
	K_\eta:=\bigcap_{j=0}^{n}B_1(c_j(\eta)).
\end{align*}

\begin{lemma}\label{lem:clustered-faces}
	There is $\eta_0(n)>0$ such that, for any $\eta\in (0,\eta_0(n))$, there holds
	\begin{enumerate}[(i)]
		\item $K_\eta$ has nonempty interior and
		$0\notin\overline K_\eta$;
		\item every sphere $\partial B_1(c_j(\eta))$ contributes a nonempty
		relatively open face to $\partial K_\eta$;
		\item the centers $c_0(\eta),\ldots,c_n(\eta)$ are affinely independent.
	\end{enumerate}
	Moreover, $K_\eta$ converges to $B_1(2e_n)$ in Hausdorff distance as $\eta \rightarrow 0$.
\end{lemma}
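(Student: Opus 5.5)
The plan is a perturbation argument around $\eta=0$. All centers satisfy $|c_j(\eta)-2e_n|\le C\eta$. The key quantitative fact is that all pairwise distances $|c_j-c_k|$ are of order $\eta$, while the vertical ($e_n$) components of $c_j-c_k$ are only $O(\eta^2)$. The horizontal components live in $E$ and equal $\eta(\omega_k-\omega_j)$ for $j,k\ge1$, or $(\eta+\eta^2)\omega_0-\eta\omega_j$ when one index is $0$.

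\emph{Hausdorff convergence and (i).} Since $|c_j(\eta)-2e_n|\le C\eta$ for every $j$, we have
\[
B_{1-C\eta}(2e_n)\subset K_\eta\subset B_{1+C\eta}(2e_n).
\]
This gives $d_H(K_\eta,B_1(2e_n))\le C\eta$, and it shows that $K_\eta$ has nonempty interior for $\eta<1/C$. Every point of $\overline K_\eta$ has norm at least $2-(1+C\eta)>0$, so $0\notin\overline K_\eta$.

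\emph{(iii).} We use the difference vectors $c_k-c_1$.
\begin{itemize}
\item For $2\le k\le n$, $c_k-c_1=(\eta(\omega_k-\omega_1),0)$. Since $\omega_1,\dots,\omega_n$ are the vertices of a regular simplex centered at the origin of $E$, these $n-1$ vectors span $E\times\{0\}$.
\item The last vector is $c_0-c_1$, whose vertical component is
\[
\sqrt{4-(\eta+\eta^2)^2}-\sqrt{4-\eta^2}.
\]
This is strictly negative, since $(\eta+\eta^2)^2>\eta^2$; to leading order it is $-\eta^3/2$.
\end{itemize}
So $c_0-c_1$ does not lie in $E\times\{0\}$. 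The $n$ difference vectors are therefore linearly independent, which is affine independence of the centers.

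\emph{(ii).} This is the step needing care. The naive candidate, the point of $\partial B_1(c_j)$ nearest the origin, gives only $u\cdot(c_k-c_j)=|c_k-c_j|^2/4$, which misses the needed margin. Instead, for $x=c_j+u$ with $|u|=1$, expanding $|x-c_k|^2$ shows
\[
x\in B_1(c_k)\iff u\cdot(c_k-c_j)>\tfrac12|c_k-c_j|^2 .
\]
The right-hand side is $O(\eta^2)$, so it suffices to find a unit $u$ with $u\cdot(c_k-c_j)\ge \kappa\eta$ for some $\kappa>0$ and all $k\ne j$.

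We choose a horizontal direction $v_j\in E$ and set $u=-\sqrt{1-t^2}\,e_n+t\,v_j$ for a fixed small $t>0$. The vertical part contributes only $O(\eta^2)$, because vertical differences of centers are $O(\eta^2)$. The horizontal choices come from \eqref{eqn:Flat-1-1}--\eqref{eqn:Flat-1-2}:
\begin{itemize}
\item For $j\ge1$, take $v_j=-\omega_j$. Then $v_j\cdot(c_k-c_j)=\eta\,\tfrac{n}{n-1}$ for $k\ge1$, $k\ne j$.
\item For $j\ge2$ and $k=0$, it equals $\eta-\tfrac{\eta+\eta^2}{n-1}\ge c\eta$, using $n\ge3$.
\item For $j=1$ and $k=0$, it equals $2\eta+\eta^2$.
\item For $j=0$, take $v_0=\omega_1$. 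Then $v_0\cdot(c_k-c_0)=\eta\,\omega_1\cdot\omega_k+\eta+\eta^2$, which is $\ge \eta\,\tfrac{n-2}{n-1}$ for $k\ge2$ and $\ge2\eta$ for $k=1$.
\end{itemize}
In every case $u\cdot(c_k-c_j)\ge t\kappa\eta-C\eta^2>\tfrac12|c_k-c_j|^2$ once $\eta<\eta_0(n)$.

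The resulting point $x_j=c_j+u$ lies on $\partial B_1(c_j)$ and in the open set $\bigcap_{k\ne j}B_1(c_k)$. By continuity, a relatively open neighbourhood of $x_j$ in $\partial B_1(c_j)$ has the same property, and it lies in $\partial K_\eta$. This is the required nonempty face.

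I expect (ii) to be the main obstacle, specifically choosing the direction $u$ with margin of order $\eta$ against the quadratic threshold. The case $k=0$ is where the dimension restriction $n\ge3$ enters.
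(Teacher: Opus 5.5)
Your proposal is correct and follows essentially the paper's argument. It uses the same sandwich $B_{1-C\eta}(2e_n)\subset K_\eta\subset B_{1+C\eta}(2e_n)$ for (i) and the Hausdorff convergence, the same ``$c_1,\dots,c_n$ span a horizontal hyperplane and $c_0$ lies off it'' argument for (iii), and for (ii) the same exit direction $-\omega_j$ from each center. The only differences are cosmetic: the paper takes the purely horizontal point $c_j-(\omega_j,0)$, which is your $t=1$, so the small tilt is unnecessary. The paper also checks membership by an exact computation at the unperturbed centers $c_j^0$ plus an $O(\eta^2)$ correction for $c_0$, where you use the uniform criterion $u\cdot(c_k-c_j)>\tfrac12|c_k-c_j|^2$.
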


\begin{proof}
	For simplicity, we denote $c_j(\eta)$ by $c_j$.
	Set 
	\begin{align*}
		c_j^0 &=(\eta\omega_j,\sqrt{4-\eta^2}), \\
		x_j^0&=c_j^0-(\omega_j,0)=((\eta-1)\omega_j,\sqrt{4-\eta^2}),
	\end{align*}
	for $j=0,1,\cdots,n$. Note that $c_j^0=c_j$ for $j=1,\cdots,n$.	
	
	Then  $|x_j^0-c_j^0|=1$, i.e., $x_j^0 \in \partial B_1(c_j^0)$, for $0\leq j \leq n$. By (\ref{eqn:Flat-1-1}) and (\ref{eqn:Flat-1-2}), we have that for $0\leq j\ne k\leq n $ and $\eta \in (0,1/2)$, 
	\begin{align*}
		|x_j^0-c_k^0|^2-1
		=-2\eta(1-\eta)(1-\omega_j\cdot\omega_k)<-\eta\,\frac{n-2}{n-1}<0, 
	\end{align*}
	which says $x_j^0\in B_1(c_k^0)$. Since 
	\begin{align*}
		|x_j^0-c_0|\leq |x_j^0-c_0^0|+|c_0^0 -c_0| \leq \sqrt{1-C_n\eta}+ \widetilde{C}_n \eta^2\leq 1-\frac{1}{2}C_n \eta +\widetilde{C}_n \eta^2, \quad j=1,\cdots,n,
	\end{align*}
	there exists $\eta_0(n)\in (0,\frac{1}{2})$ such that $|x_j^0-c_0|<1$ for all $\eta \in (0,\eta_0(n))$. 
	Hence, 
	\begin{align}\label{eqn:Flat-1-3}
		x_j^0\in \partial B_1(c_j), \quad x_j^0 \in \bigcap_{\substack{0\leq k\leq n \\ k\neq j}} B_1(c_k),\quad j=1,\cdots,n.
	\end{align}
	Taking $x_0:=c_0-(\omega_0,0)=x_0^0+c_0-c_0^0$, and 
	\begin{align*}
		|x_0 -c_j| \leq |x_0^0-c_j|+|c_0 - c_0^0 |\leq \sqrt{1-C_n\eta}+ C_n \eta^2<1
	\end{align*}
	then
	\begin{align}\label{eqn:Flat-1-4}
		x_0 \in \partial B_1(c_0), \quad x_0 \in \bigcap_{1\leq k\leq n}B_1(c_k).
	\end{align}
	Combining (\ref{eqn:Flat-1-3}) and (\ref{eqn:Flat-1-4}) gives (ii).

	Let $x_\eta:=(0,\sqrt{4-\eta^2})$, then, $|x_\eta-c_j^0|=\eta$ and $|c_0^0-c_0|
	\leq \widetilde{C}_n \eta^2$, which implies that for all $\eta \in (0,\eta_0(n))$, 
	\begin{align*}
		B_{1-2\eta}(x_\eta)\subset K_\eta.
	\end{align*}	
	Due to $|c_j|=2$ for $0\leq j\leq n$, hence (i) holds.
	
	The centers $c_1,\cdots,c_n$ affinely span one horizontal hyperplane, while $c_0$ has a different last coordinate. Thus, $n+1$ centers are affinely independent and (iii) follows.
	
	Finally, set $d_\eta=\max_{0\leq j\leq n} |c_j-2e_n|$. Triangle inequality gives 
	\begin{align*}
		B_{1-d_\eta}(2e_n) \subset K_\eta \subset B_{1+d_\eta}(2e_n).
	\end{align*}
	Since $d_\eta \rightarrow 0$ as $\eta \rightarrow 0$, this proves Hausdorff convergence.
\end{proof}

From now on, we choose $\eta_*\in (0,\eta_0(n))$ and denote $K_{\eta_*}$ by $\Omega$, i.e., 
\begin{align*}
	\Omega:=K_{\eta_*}=\bigcap_{j=0}^{n}B_1(c_j(\eta_*)).
\end{align*}
In the remainder of paper, we simply denote $c_j(\eta_*)$ by $c_j$, $j=0,\cdots,n$.
Now let us write the solution explicitly.
\begin{proposition}
	Let $a=\frac{n-2}{2}$ and $q_{*}=\frac{n}{n-1}$.  The function
	\begin{equation}\label{eq:explicit-corner-solution}
		u_*(x)=(3a)^a|x|^{2-n}
	\end{equation}
	is a positive, nonconstant, and smooth solution to
	\begin{equation}\label{equ:Robin-K-eta}
		\begin{cases}
			\Delta u=0, & \text{in } \Omega,\\
			\partial_\nu u +a u=u^{q_*}, &\text{a.e. } \partial \Omega,
		\end{cases}
	\end{equation}
	It is therefore also a weak solution of \eqref{equ:Robin-K-eta}.
\end{proposition}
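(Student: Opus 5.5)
The plan is a direct computation. The point is that $0\notin\overline\Omega$ (Lemma~\ref{lem:clustered-faces}(i)) and that every face of $\partial\Omega$ lies on a unit sphere whose center has norm $2$. Here I read $q_*$ as $\frac{n}{n-2}$, as fixed at the start of the paper; the value $\frac{n}{n-1}$ in the statement looks like a typo, since the exponent check below only works for $\frac{n}{n-2}$.

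\emph{Interior.} Since $0\notin\overline\Omega$, the function $u_*(x)=C|x|^{2-n}$ with $C=(3a)^a$ is real-analytic and positive on a neighbourhood of $\overline\Omega$. It is harmonic there, being a multiple of the Newtonian potential. It is nonconstant because $|x|$ is not constant on the open set $\Omega$.

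\emph{Boundary.} By Lemma~\ref{lem:clustered-faces}(ii), $\partial\Omega$ is the union of the relatively open spherical faces $F_j\subset\partial B_1(c_j)$ together with the lower-dimensional edges where two or more spheres meet. These edges have $\mathcal H^{n-1}$-measure zero, so it suffices to check the Robin condition on each face $F_j$. On $F_j$ the outward unit normal of $\Omega$ is $\nu=x-c_j$, since $\Omega\subset B_1(c_j)$ and $|x-c_j|=1$. From $\nabla u_*=-2aC|x|^{-n}x$ we get
\[
\partial_\nu u_*=-2aC|x|^{-n}\,x\cdot(x-c_j).
\]
Expanding $|x-c_j|^2=1$ with $|c_j|^2=4$ gives $x\cdot c_j=\tfrac{|x|^2+3}{2}$, and hence $x\cdot(x-c_j)=\tfrac{|x|^2-3}{2}$. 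Therefore
\[
\partial_\nu u_*+a u_*=-aC|x|^{2-n}+3aC|x|^{-n}+aC|x|^{2-n}=3aC|x|^{-n}.
\]
On the other hand $u_*^{q_*}=C^{n/(n-2)}|x|^{-n}$, so the equation holds on every face exactly when $C^{2/(n-2)}=3a$, that is, $C=(3a)^a$. This is the stated constant, and the condition is independent of $j$; this independence is why all centers were placed on the sphere of radius $2$.

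\emph{Weak formulation.} $\Omega$ is an intersection of finitely many balls, hence convex and therefore Lipschitz, and $u_*\in C^\infty(\overline\Omega)$. The divergence theorem holds on Lipschitz domains with the a.e.-defined outward normal. For any $\varphi\in H^1(\Omega)$ it gives
\[
\int_\Omega\nabla u_*\cdot\nabla\varphi\,\mathrm dV=\int_{\partial\Omega}\partial_\nu u_*\,\varphi\,\mathrm dA=\int_{\partial\Omega}\bigl(u_*^{q_*}-a u_*\bigr)\varphi\,\mathrm dA,
\]
which is the weak form of \eqref{equ:Robin-K-eta}.

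The only delicate point is this last step, namely justifying Green's formula across the edges. It is handled by the standard divergence theorem for Lipschitz domains, together with the fact that the edges are null sets for surface measure. Everything else is routine algebra.
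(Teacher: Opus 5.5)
Your proposal is correct and follows the paper's proof essentially step for step. Smoothness comes from $0\notin\overline\Omega$, the face identity $x\cdot\nu=(|x|^2-3)/2$ uses $|c_j|=2$ to give the Robin equation on each open face, and Green's formula on the Lipschitz domain gives the weak form. You are also right that $q_*$ must be $\frac{n}{n-2}$, as fixed earlier in the paper; the $\frac{n}{n-1}$ in the statement is a typo.
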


\begin{proof}
	By Lemma~\ref{lem:clustered-faces}, $0\notin \overline{\Omega}$ and $u_* \in C^\infty(\overline{\Omega})$. Direct calculation shows that $u_*$ is harmonic. On the open face contained in
	$\partial B_1(c_j)$, there holds
	\begin{equation}\label{eq:face-geometric-identity}
		\nu=x-c_j,\qquad |x|^2-2x\cdot\nu=|c_j|^2-1=3,
	\end{equation}
	while
	\[
	\partial_\nu u_*
	=-(n-2)\frac{x\cdot\nu}{|x|^2} u_*,
	\qquad
	u_*^{q_*-1}=3a|x|^{-2}.
	\]
	Since $n-2=2a$, $\partial_\nu u +a u=u^{q_*}$ follows immediately from
	\eqref{eq:face-geometric-identity}.  $\Omega$ is Lipschitz and $u_*$
	is smooth in a neighborhood of its closure.  Green's formula on Lipschitz
	domains and the almost-everywhere boundary identity give, for every
	$\varphi\in H^1(\Omega)$,
	\[
	\int_{\Omega}\nabla u_*\cdot\nabla\varphi
	+a\int_{\partial \Omega}u_*\varphi
	=\int_{\partial \Omega}u_*^{q_*}\varphi.
	\]
	This is the required weak formulation.
\end{proof}

The decisive property of $\Omega$ is not merely the explicit
solution, but its nondegeneracy.

Let $\rho(x):=\mathrm{dist}(x,\partial \Omega)$. Then
\[
\rho(x)=1-\max_j|x-c_j|.
\]
Define
\begin{equation}\notag
	V=\rho-\frac{\rho^2}{2}
	=\frac12\left(1-\max_j|x-c_j|^2\right).
\end{equation}
Since $|c_j|=2$, this may also be written as
\begin{equation}\notag
	V(x)=-\frac{|x|^2}{2}
	+\min_j\left(c_j\cdot x-\frac32\right).
\end{equation}
Thus $D^2V\le-I$ in the sense of matrix-valued Radon measures, $V>0$ in
$\Omega$, $V=0$ on $\partial \Omega$, and $\nabla V=-\nu$ on each open spherical
face. In fact, set
\[
G_j=\{x\in \Omega:c_j\cdot x\le c_k\cdot x\ \text{for all }k\},
\qquad
N_{ij}=\frac{c_i-c_j}{|c_i-c_j|},
\]
and let $S_{ij}$ be the relative interior of the $(n-1)$-dimensional part of
$G_i\cap G_j\cap \Omega$; empty interfaces are omitted.  Affine independence
of the centers implies that the locus where three or more affine functions
attain the minimum has codimension at least two.  Hence, the displayed expression
for $V$ gives
\begin{equation}\notag
	D^2V=-I\,\mathcal L^n
	-\sum_{i<j}|c_i-c_j|N_{ij}\otimes N_{ij}\,
	\mathcal H^{n-1}\!\llcorner S_{ij}.
\end{equation}

\begin{lemma}\label{lem:weighted-identity}
	If $v\in H^2(\Omega)$ is harmonic and
	$v_\nu\in L^2(\partial \Omega)$, then
	\begin{equation}\label{eq:weighted-identity}
		\begin{split}
			\int_{\partial \Omega} (\partial_\nu v)^2\dd S
			=&\int_{\Omega}\bigl(|\nabla v|^2+V|D^2v|^2\bigr)\dd x\\
			&+\sum_{i<j}|c_i-c_j|
			\int_{S_{ij}}(\partial_{N_{ij}}v)^2\dd\mathcal H^{n-1}.
		\end{split}
	\end{equation}
\end{lemma}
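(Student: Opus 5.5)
This is a Reilly/Pohozaev-type identity. The natural vector field to use is built from $V$. Since $v$ is harmonic, we have the pointwise identity
\[
\Delta\!\left(\tfrac12|\nabla v|^2\right)=|D^2v|^2 .
\]
The plan is to pair this identity with the weight $V$ and integrate by parts twice, putting all derivatives onto $V$. We then use the distributional Hessian of $V$ computed above together with the boundary data $V=0$, $\nabla V=-\nu$.

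Concretely, set $w=\tfrac12|\nabla v|^2$. Assume first that $v$ is smooth up to $\overline\Omega$. Green's second identity on the Lipschitz domain gives
\[
\int_\Omega V\,\Delta w=\int_\Omega w\,\Delta V+\int_{\partial\Omega}\bigl(V\partial_\nu w-w\,\partial_\nu V\bigr).
\]
This route, however, only produces $\Delta V=-n\mathcal L^n-\sum|c_i-c_j|\mathcal H^{n-1}\llcorner S_{ij}$, which gives the wrong interface terms: we would get $|\nabla v|^2$ on $S_{ij}$ rather than $(\partial_{N_{ij}}v)^2$.

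Instead I will use the full Hessian via the vector field
\[
X=(\nabla V\cdot\nabla v)\nabla v-\tfrac12|\nabla v|^2\nabla V .
\]
Its divergence is
\[
\diver X=D^2V(\nabla v,\nabla v)+\nabla V\cdot\nabla v\,\Delta v+D^2v(\nabla V,\nabla v)-\tfrac12|\nabla v|^2\Delta V-D^2v(\nabla v,\nabla V).
\]
Using $\Delta v=0$, this reduces to $D^2V(\nabla v,\nabla v)-\tfrac12|\nabla v|^2\Delta V$. On the bulk part, $D^2V=-I$, so this equals $-|\nabla v|^2+\tfrac n2|\nabla v|^2$. Combining this with $\int V\Delta w=\int V|D^2v|^2$ and $\int V\Delta w=-\int\nabla V\cdot\nabla w$ (no boundary term since $V=0$), and writing $\nabla w=D^2v\,\nabla v$, the $n$-dependent terms cancel. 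The identity to aim for is
\[
\int_\Omega V|D^2v|^2=-\int_\Omega D^2v(\nabla V,\nabla v)=-\int_\Omega\bigl[\diver((\nabla V\cdot\nabla v)\nabla v)-D^2V(\nabla v,\nabla v)\bigr],
\]
using harmonicity. The divergence term integrates to
\[
\int_{\partial\Omega}(\nabla V\cdot\nabla v)\,\partial_\nu v=-\int_{\partial\Omega}(\partial_\nu v)^2 .
\]
Then $-\int D^2V(\nabla v,\nabla v)$, with the measure Hessian, contributes $-\int|\nabla v|^2$ from the bulk and $-\sum|c_i-c_j|\int_{S_{ij}}(\partial_{N_{ij}}v)^2$ from the interfaces. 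Rearranging gives exactly \eqref{eq:weighted-identity}, and this cleaner route avoids the vector field $X$ altogether.

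The justification step is the delicate one. $V$ is only Lipschitz (semiconcave), so the second integration by parts, $\int\nabla V\cdot\nabla\psi$-type terms against $D^2V$, must be done with the Radon-measure Hessian. That requires $\nabla v$ to have a trace on the interfaces $S_{ij}$, which holds since $v\in H^2$ and the $S_{ij}$ lie inside $\Omega$, where $v$ is smooth. I would first prove the identity on subdomains $\{V>t\}$, or with $V$ replaced by $(V-t)_+$, where $v$ is smooth up to the boundary by interior harmonic regularity. On each piece $G_j$, $V$ is smooth, so I integrate by parts there and sum; the jumps of $\nabla V$ across $S_{ij}$ equal $\pm(c_i-c_j)$, which produces the interface terms. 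Finally I let $t\to0$. The bulk terms converge by monotone and dominated convergence, using $v\in H^2$ and $V\le\rho$. The boundary term on $\{V=t\}$ converges to $\int_{\partial\Omega}(\partial_\nu v)^2$ using $v_\nu\in L^2(\partial\Omega)$ and nontangential convergence of $\nabla v$ for harmonic functions on Lipschitz domains. This last limit is the main obstacle. If needed, I would instead approximate $v$ in $H^2$ by harmonic functions smooth up to $\overline\Omega$, obtained by dilating about an interior point of the star-shaped convex domain, and pass to the limit in each term.
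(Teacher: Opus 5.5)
Your final route is the paper's proof. The paper first shows $\int_\Omega V|D^2v|^2=-\int_\Omega D^2v(\nabla V,\nabla v)$ by testing $\Delta\partial_l v=0$ against $V\partial_l v\in H^1_0(\Omega)$, then integrates $\diver\bigl((\nabla V\cdot\nabla v)\nabla v\bigr)$ over each cell $G_j$ and sums; there $D^2V=-I$, $\nabla V=-\nu$ on $\partial\Omega$, and the jump of $\nabla V$ across $S_{jk}$ gives the terms $|c_j-c_k|(\partial_{N_{jk}}v)^2$. Your truncation to $\{V>t\}$ and the dilation approximation are valid but not needed: for $v\in H^2$ the field $(\nabla V\cdot\nabla v)\nabla v$ already lies in $W^{1,1}(G_j)$, so the divergence theorem on the Lipschitz cells $G_j$ applies directly with no limit $t\to0$.
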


\begin{proof}
	Since $\Delta (\partial_l v)=0$ distributionally, 
	$V \partial_l v\in H^1_0(\Omega)$ and $V=0$ on $\partial \Omega$, there holds
	\begin{equation}\label{eqn:Flat-2-1}
		\int_{\Omega}V|D^2v|^2=-\int_{\Omega}D^2v(\nabla V,\nabla v). 
	\end{equation}
	On $G_j$, $\nabla V=c_j-x$ and $D^2V=-I$.  The vector field
	$F_j=(\nabla V\cdot\nabla v)\nabla v$ belongs to $W^{1,1}$, and
	\begin{align}\label{eqn:Flat-2-2}
		\diver F_j=-|\nabla v|^2+D^2v(\nabla V,\nabla v).	
	\end{align}
	Since
	\begin{align*}
		\int_{G_j} \mathrm{div} F_j 
		&=\int_{\partial G_j} (\nabla V\cdot \nabla v ) \partial_\nu v \\
		&= \int_{\partial G_j \cap \partial \Omega} (\nabla V\cdot \nabla v ) \partial_\nu v
		+ \sum_{k\neq j} \int_{S_{jk}} (\nabla V\cdot \nabla v ) \partial_\nu v \\
		&= -\int_{\partial G_j \cap \partial \Omega} (\partial_\nu v)^2
		+ \sum_{k\neq j} \int_{S_{jk}} ((c_j-x)\cdot \nabla v ) \partial_{N_{jk}} v,	
	\end{align*}
	we obtain
	\begin{align*}
		\int_{\Omega} \mathrm{div} F_j = \sum_j \int_{G_j} \mathrm{div} F_j
		&=-\int_{\partial \Omega} (\partial_\nu v)^2 + \sum_{j<k} \int_{S_{jk}} ((c_j-c_k)\cdot \nabla v ) \partial_{N_{jk}} v \\
		&=-\int_{\partial \Omega} (\partial_\nu v)^2 + \sum_{j<k} \int_{S_{jk}} |c_j-c_k| (\partial_{N_{jk}} v)^2 
	\end{align*}
	which combining (\ref{eqn:Flat-2-1}) and (\ref{eqn:Flat-2-2}) gives (\ref{eq:weighted-identity}).
\end{proof}

\begin{proposition}\label{prop:corner-steklov-gap}
	The first nonzero Steklov eigenvalue of $\Omega$ satisfies
	\begin{equation}\notag
		\sigma_1(\Omega)>1.
	\end{equation}
\end{proposition}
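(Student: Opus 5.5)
The plan is to combine the weighted identity of Lemma~\ref{lem:weighted-identity} with the Steklov eigenvalue equation, following the Xia--Xiong strategy for convex domains with $\kappa_i\ge1$, adapted to the corners of $\Omega$. Let $v$ be a first Steklov eigenfunction, so $\Delta v=0$ in $\Omega$, $\partial_\nu v=\sigma v$ on $\partial\Omega$ with $\sigma=\sigma_1(\Omega)$, and $\int_{\partial\Omega}v=0$. Since Steklov eigenvalues of Lipschitz domains are determined by the Dirichlet-to-Neumann map, $\sigma_1(\Omega)$ is attained by such a $v\in H^1(\Omega)$. The first step is a regularity check: $v\in H^2(\Omega)$ and $v_\nu\in L^2(\partial\Omega)$, so that Lemma~\ref{lem:weighted-identity} applies. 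Because $\Omega$ is convex, I would obtain $H^2$ regularity for the Neumann-type problem $\Delta v=0$, $\partial_\nu v=\sigma v\in H^{1/2}(\partial\Omega)$ from the Grisvard/Kadlec theory on convex domains, after bootstrapping $v|_{\partial\Omega}$ from $H^{1/2}$ to $H^{1}$.

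Assuming this, Lemma~\ref{lem:weighted-identity} gives
\[
\sigma^2\int_{\partial\Omega}v^2=\int_\Omega|\nabla v|^2+\int_\Omega V|D^2v|^2+\sum_{i<j}|c_i-c_j|\int_{S_{ij}}(\partial_{N_{ij}}v)^2 .
\]
Using $\int_\Omega|\nabla v|^2=\sigma\int_{\partial\Omega}v^2$, this becomes
\[
\sigma(\sigma-1)\int_{\partial\Omega}v^2=\int_\Omega V|D^2v|^2+\sum_{i<j}|c_i-c_j|\int_{S_{ij}}(\partial_{N_{ij}}v)^2\ \ge 0 .
\]
Since $v\not\equiv0$ on $\partial\Omega$, we get $\sigma\ge1$.

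To obtain the strict inequality, I would argue by contradiction and suppose $\sigma=1$. Then $V|D^2v|^2\equiv0$, and since $V>0$ in $\Omega$ this forces $D^2v\equiv0$. Hence $v(x)=b\cdot x+\beta$ is affine, and the boundary condition reads $\partial_\nu v=b\cdot\nu=v$ on every open spherical face. On the face in $\partial B_1(c_j)$ we have $\nu=x-c_j$, so
\[
b\cdot x-b\cdot c_j=b\cdot x+\beta,\qquad\text{i.e.}\qquad -b\cdot c_j=\beta\quad\text{for all }j .
\]
Thus $b\cdot(c_i-c_j)=0$ for all $i,j$. Because the centers are affinely independent (Lemma~\ref{lem:clustered-faces}(iii)), the differences $c_i-c_j$ span $\R^n$, so $b=0$. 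Then $v\equiv\beta$, and the mean-zero condition gives $v\equiv0$, a contradiction. Alternatively, the vanishing of the interface terms gives $\partial_{N_{ij}}v=b\cdot N_{ij}=0$ on each nonempty $S_{ij}$, which leads to the same conclusion.

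The main obstacle is the regularity step. The identity needs $v\in H^2(\Omega)$ and the traces of $\nabla v$ on $\partial\Omega$ near the edges where the spherical faces meet. I expect convexity to be the key input here, through $H^2$ regularity for Neumann problems on convex domains, possibly with an approximation argument by smooth convex subdomains if a direct citation does not apply. A secondary point is that $\sigma_1$ is attained, which holds because the trace embedding is compact on Lipschitz domains.
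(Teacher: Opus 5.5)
Your argument is correct and is essentially the paper's proof: Lemma~\ref{lem:weighted-identity} together with $\int_\Omega|\nabla v|^2=\sigma\int_{\partial\Omega}v^2$ gives $\sigma\ge1$. In the case $\sigma=1$, the vanishing of $V|D^2v|^2$ makes $v$ affine, and then the face conditions $-b\cdot c_j=\beta$ (one for each face, all nonempty by Lemma~\ref{lem:clustered-faces}(ii)) together with affine independence of the centers force $v\equiv0$.

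The only difference is the $H^2$ regularity step that you flag as the main obstacle. The paper does not bootstrap through Grisvard/Kadlec, a route that is not immediate anyway for inhomogeneous Neumann data on a domain with edges. Instead it simply cites Lamberti--Provenzano, Theorem~1, for $H^2$ regularity of weak Steklov eigenfunctions on bounded convex domains.
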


\begin{proof}
	Let $\sigma>0$ be a Steklov eigenvalue and let $v$ be a nonzero eigenfunction,
	so $\partial_\nu v=\sigma v$ and $v|_{\partial \Omega}\neq 0$.  Weak Steklov eigenfunctions on bounded convex domains
	belong to $H^2$ (see \cite{LambertiProvenzano2026} Theorem~1); hence
	Lemma~\ref{lem:weighted-identity} applies and there holds
	\begin{align}\label{ineq:First-Steklov-1}
		\int_{\partial \Omega} (\partial_\nu v)^2\dd S
		\geq \int_{\Omega}\bigl(|\nabla v|^2+V|D^2v|^2\bigr)\dd x.
	\end{align}
	The identities
	\[
	\int_{\Omega}|\nabla v|^2 = \int_{\partial \Omega} v \partial_\nu v
	=\sigma\int_{\partial \Omega}v^2,
	\qquad
	\int_{\partial \Omega} (\partial_\nu v)^2
	=\sigma^2\int_{\partial \Omega}v^2
	\]
	imply $\sigma\ge1$.
	
	If $\sigma=1$, \eqref{ineq:First-Steklov-1} forces $D^2v=0$, so $v(x)=p\cdot x+b$.  On the
	$j$th open face, $\partial_\nu v=v$ becomes $p\cdot(x-c_j)=p\cdot x+b$ which leads to
	\[
	\qquad p\cdot c_j=-b.
	\]
	Every face is nonempty and the centers are affinely independent; hence
	$p=0$ and $b=0$, a contradiction occurs and all the Steklov eigenvalues $\sigma>1$. In particular, $\sigma_1(\Omega)>1$.
\end{proof}

We will show that the gap $\sigma_1(\Omega)>1$ can be transferred to the linearization at $u_*$ by a Kelvin involution
that preserves all the unit spheres.  Set
\begin{equation}\notag
	T(x)=\frac{3x}{|x|^2},\qquad
	(\mathcal Kv)(x)=\left(\frac{\sqrt3}{|x|}\right)^{n-2}v(Tx).
\end{equation}
A direct computation gives
\begin{equation}\notag
	|T(x)-c_j|^2-1
	=\frac3{|x|^2}\bigl(|x-c_j|^2-1\bigr),
\end{equation}
so $T(\Omega)=\Omega$.  Since $0\notin \overline{\Omega}$ and $\mathcal{K}^2=\mathrm{Id}$, $\mathcal K$ is a bounded involution on $H^1(\Omega)$.  The boundary operator $B:=\partial_\nu+a$ obeys
\begin{equation*}
	B(\mathcal Kv)(x)
	=\left(\frac{\sqrt3}{|x|}\right)^n(Bv)(Tx),
\end{equation*}
which implies that if $B(v)=v^{q_*}$, then $B(\mathcal{K}v)=(\mathcal{K}v)^{q_*}$. And the Laplace operator satisfies
\begin{align*}
	\Delta (\mathcal{K}v)=\left(\frac{\sqrt{3}}{|x|}\right)^{n+2} (\Delta v)(Tx).
\end{align*}
Since
\begin{align*}
	&\int_{\Omega} \nabla (\mathcal{K}v)\nabla (\mathcal{K}\phi) 
	+a \int_{\partial \Omega} (\mathcal{K}v)(\mathcal{K}\phi) -q_* \int_{\partial \Omega} (\mathcal{K}u_*)^{q_*-1} (\mathcal{K}v)(\mathcal{K}\phi)\\
	=& \int_{\Omega} \nabla v \nabla \phi +a \int_{\partial \Omega} v \phi -q_* \int_{\partial \Omega} u_*^{q_*-1} v\phi,
\end{align*}
for all $v,\phi \in H^1(\Omega)$, and 
\begin{equation*}
	\mathcal Ku_*=a^a,
\end{equation*}
$\mathcal{K}$ transforms the linearized problem at $u_*$ into the Steklov eigenvalue problem on $\Omega$
\begin{align*}
	\begin{cases}
		\Delta v=0 \quad \text{in } \Omega\\
		\partial_\nu v=v \quad \text{on } \partial \Omega.
	\end{cases}
\end{align*}
Proposition~\ref{prop:corner-steklov-gap}
implies the following conclusion.
\begin{corollary}\label{cor:corner-nondegenerate}
	The linearization of \eqref{equ:Robin-K-eta} at $u_*$ has trivial kernel in $H^1(\Omega)$.
\end{corollary}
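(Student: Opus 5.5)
The plan is to use the Kelvin involution $\mathcal K$ to turn a kernel element of the linearization at $u_*$ into a Steklov eigenfunction with eigenvalue $1$, and then rule that out with Proposition~\ref{prop:corner-steklov-gap}.

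\emph{Setting up the kernel.} The linearization of \eqref{equ:Robin-K-eta} at $u_*$ is the bilinear form
\[
L(v,\phi)=\int_\Omega\nabla v\cdot\nabla\phi+a\int_{\partial\Omega}v\phi-q_*\int_{\partial\Omega}u_*^{q_*-1}v\phi .
\]
Let $v\in H^1(\Omega)$ satisfy $L(v,\phi)=0$ for every $\phi\in H^1(\Omega)$, and set $w=\mathcal Kv$. Since $\mathcal K$ is a bounded involution on $H^1(\Omega)$, every test function can be written as $\mathcal K\psi$ with $\psi\in H^1(\Omega)$. The invariance identity displayed before the corollary then gives, for all $\psi\in H^1(\Omega)$,
\[
\int_\Omega\nabla w\cdot\nabla\psi+a\int_{\partial\Omega}w\psi-q_*\int_{\partial\Omega}(\mathcal Ku_*)^{q_*-1}w\psi=0 .
\]
Since $\mathcal Ku_*=a^a$, we have $q_*(\mathcal Ku_*)^{q_*-1}=q_*a=\frac{n}{n-2}\cdot\frac{n-2}{2}=\frac n2=a+1$. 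The identity therefore reduces to
\[
\int_\Omega\nabla w\cdot\nabla\psi=\int_{\partial\Omega}w\psi\quad\text{for all }\psi\in H^1(\Omega).
\]
So $w$ is a weak Steklov eigenfunction with eigenvalue $1$, possibly zero. I will check the invariance identity directly: pull back by $T$, which maps $\Omega$ onto itself and each spherical face onto itself. Use the conformal weights of the Kelvin transform and the face identity $|x|^2-2x\cdot\nu=3$. The $\tfrac{n-2}{2}$ boundary term produced by the conformal factor is absorbed by the $a$-term, exactly as in the identity for $B(\mathcal Kv)$. This step is mostly bookkeeping.

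\emph{Concluding.} Take $\psi\equiv1$: this gives $\int_{\partial\Omega}w=0$, so $w$ is orthogonal to the constants, which span the $\sigma=0$ eigenspace. Suppose $w|_{\partial\Omega}\neq0$. Then $w$ is an eigenfunction for the eigenvalue $1$. The argument of Proposition~\ref{prop:corner-steklov-gap} applies to it: $H^2$ regularity holds on convex domains by \cite{LambertiProvenzano2026}, and Lemma~\ref{lem:weighted-identity} then forces every positive Steklov eigenvalue to satisfy $\sigma>1$. This is a contradiction, so $w|_{\partial\Omega}=0$. But then $\int_\Omega|\nabla w|^2=\int_{\partial\Omega}w^2=0$, so $w$ is constant with zero trace, and hence $w=0$. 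Consequently $v=\mathcal Kw=0$, and the kernel is trivial.

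\emph{Main obstacle.} The delicate point is justifying the transformation rule in weak form on the Lipschitz domain. The faces are invariant under $T$ individually, since $|Tx-c_j|^2-1=\frac{3}{|x|^2}(|x-c_j|^2-1)$, so the edges are mapped to edges and form a null set on $\partial\Omega$. The change of variables therefore holds face by face. I would verify it for smooth $v,\phi$, where it follows from the pointwise formulas for $\Delta(\mathcal Kv)$ and $B(\mathcal Kv)$ via Green's formula, and then extend by density to $H^1(\Omega)$ using continuity of the trace.
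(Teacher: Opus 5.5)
Your proposal is correct and follows the paper's route. The Kelvin involution $\mathcal K$ preserves the linearized form and sends $u_*$ to $a^a$, so (using $q_*a=a+1$) a kernel element becomes a weak Steklov eigenfunction with eigenvalue $1$, which Proposition~\ref{prop:corner-steklov-gap} excludes; your treatment of the zero-trace case and the face-by-face density justification of the invariance only fill in details the paper leaves implicit.
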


\section{Positive smooth solutions on approximating smooth convex domains }
\label{sec:solution-approximating-domain}

The domain $\Omega$ supplies both an explicit solution and strict
nondegeneracy, but its boundary is not smooth.  We approximate it by smooth
domains while preserving the principal-curvature lower bound bigger than $1$ and prove that
the nondegenerate solution persists.

Let $f_j(x)=|x-c_j|^2-1$ and define
\begin{equation}\notag
	G_\tau(x)=
	\tau\log\left(2\sum_{j=0}^{n}e^{f_j(x)/\tau}\right),
	\qquad
	\Omega_\tau=\{x\in \mathbb{R}^n: G_\tau(x)<0\}.
\end{equation}
As $\tau\downarrow0$, $G_\tau$ converges to $\max_jf_j$, so
$\Omega_\tau$ is the standard inner smoothing of the ball intersection $\Omega$.

Set
\begin{equation}\notag
	\pi_j=\frac{e^{f_j/\tau}}{\sum_ke^{f_k/\tau}},
	\qquad
	\bar c=\sum_{j=0}^n \pi_jc_j.
\end{equation}
Direct differentiation gives
\begin{equation}\label{eq:softmax-hessian}
	\nabla G_\tau=2(x-\bar c),
	\qquad
	D^2G_\tau
	=2I+\frac4\tau
	\sum_{j=0}^n \pi_j(c_j-\bar c)\otimes(c_j-\bar c).
\end{equation}

\begin{proposition}\label{prop:softmax-geometry}
	Let
	\begin{equation}\notag
		\rho_\tau=\sqrt{1-\tau\log2}.
	\end{equation}
	There exists $\mu_0>0$ only dependent of $n$ and $\Omega$ such that for every $\tau\in (0,\mu_0]$, the domain $\Omega_\tau$ is
	nonempty, bounded, real analytic, and strictly convex, and
	\begin{equation}\label{ineq:principle-curvature-1}
		\min_i\kappa_i(\partial \Omega_\tau)>\rho_\tau^{-1}>1.
	\end{equation}
\end{proposition}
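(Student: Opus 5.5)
The plan is to read everything off the softmax formulas \eqref{eq:softmax-hessian}. Real analyticity of $G_\tau$ is immediate, since it is log of a positive sum of exponentials of polynomials.

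\emph{Step 1: comparison with $\max_j f_j$ and nondegeneracy.} From
\[
\max_j f_j+\tau\log 2\le G_\tau\le \max_j f_j+\tau\log(2(n+1))
\]
we get $\Omega_\tau\subset\Omega$, so $\Omega_\tau$ is bounded. We also get $\Omega_\tau\supset\{\max_j f_j<-\tau\log(2(n+1))\}$. The latter is nonempty for small $\tau$, because $\Omega$ has nonempty interior (Lemma~\ref{lem:clustered-faces}) and $\max_j f_j<0$ at interior points. Next, $D^2G_\tau\ge 2I$, so $G_\tau$ is strictly convex and $\Omega_\tau$ is convex. On $\partial\Omega_\tau$ the gradient $\nabla G_\tau$ cannot vanish: otherwise $\{G_\tau<0\}$ would be empty, since a convex function is minimized at a critical point. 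Hence $\partial\Omega_\tau$ is a real-analytic hypersurface and the implicit function theorem applies.

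\emph{Step 2: curvature formula.} For the sublevel set of a convex function, the principal curvatures with respect to the outward normal are the eigenvalues of
\[
\frac{D^2G_\tau|_{T\partial\Omega_\tau}}{|\nabla G_\tau|}.
\]
Since $D^2G_\tau\ge 2I$ and $|\nabla G_\tau|=2|x-\bar c|$, every principal curvature satisfies $\kappa_i\ge 1/|x-\bar c|$. The claim therefore reduces to showing
\[
|x-\bar c|<\rho_\tau \quad\text{on } \partial\Omega_\tau .
\]

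\emph{Step 3, the key estimate.} On $\partial\Omega_\tau$ we have $\sum_k e^{f_k/\tau}=\tfrac12$, so $f_j=\tau\log(\pi_j/2)$, that is,
\[
|x-c_j|^2=1+\tau\log\pi_j-\tau\log 2 .
\]
Averaging with weights $\pi_j$ and using the variance identity
\[
\sum_j\pi_j|x-c_j|^2=|x-\bar c|^2+\sum_j\pi_j|c_j-\bar c|^2
\]
gives
\[
|x-\bar c|^2=\rho_\tau^2+\tau\sum_j\pi_j\log\pi_j-\sum_j\pi_j|c_j-\bar c|^2 .
\]
The entropy term $\tau\sum_j\pi_j\log\pi_j$ is $\le 0$, so $|x-\bar c|\le\rho_\tau$, with equality only if $\pi$ is a point mass. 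A point mass never occurs, since all $\pi_j>0$. Hence $|x-\bar c|<\rho_\tau$ strictly, and
\[
\kappa_i\ge \frac1{|x-\bar c|}>\rho_\tau^{-1}>1 .
\]
This holds for any $\tau$ with $\tau\log 2<1$, so $\mu_0$ only has to ensure nonemptiness in Step 1 and $\rho_\tau>0$.

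The main obstacle is Step 3, specifically obtaining the strict inequality uniformly. I expect it to go through exactly because the normalization factor $2$ inside the log was chosen to make $\rho_\tau$ appear. The only other care needed is the convex-analysis fact used in Step 1, that $\nabla G_\tau\ne0$ on the zero level set.
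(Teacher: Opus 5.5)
Your proof is correct and follows the paper's argument: $D^2G_\tau\ge 2I$, the principal curvatures are the eigenvalues of $D^2G_\tau|_{T\partial\Omega_\tau}/|\nabla G_\tau|$, and $|\nabla G_\tau|=2|x-\bar c|<2\rho_\tau$ on the zero level set. The only difference is in that last bound: the paper notes that $2\sum_j e^{f_j/\tau}=1$ forces each $2e^{f_j/\tau}<1$, so $|x-c_j|<\rho_\tau$ for every $j$, and then applies the triangle inequality to $\sum_j\pi_j(x-c_j)$, whereas your variance--entropy identity is exact and exhibits the deficit $\rho_\tau^2-|x-\bar c|^2$ explicitly.
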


\begin{proof}
	Equation \eqref{eq:softmax-hessian} gives $D^2G_\tau\ge2I$.  The common
	interior point $x_{\eta_*}$ in Lemma~\ref{lem:clustered-faces} satisfies
	$\max_jf_j(x_{\eta_*})<0$, hence $G_\tau(x_{\eta_*})<0$ for small $\tau$; meanwhile,
	$G_\tau(x)\to+\infty$ as $|x|\to\infty$.  Strict convexity makes the zero
	level set regular, compact, and strictly convex.  Since $G_\tau$ is real
	analytic, so is its zero level.
	
	On $\partial \Omega_\tau$,
	$2\sum_je^{f_j/\tau}=1$ which implies
	\begin{equation*}
		|x-c_j|<\rho_\tau\qquad j=0,1,\cdots,n.
	\end{equation*}
	Therefore, on $\partial \Omega_\tau$, 
	\[
	0<|\nabla G_\tau|
	=2\left|\sum_{j=0}^n \pi_j(x-c_j)\right|<2\rho_\tau.
	\]
	For every unit tangent vector $X$ on $\partial\Omega_\tau$, there holds
	then
	\[
	\II(X,X)=\frac{D^2G_\tau(X,X)}{|\nabla G_\tau|}
	>\rho_\tau^{-1}
	\]
	and (\ref{ineq:principle-curvature-1}) follows.
\end{proof}

To compare equations on $\Omega$ and on the smooth domains $\Omega_\tau$ in one function
space, we construct an explicit radial pullback.  The log-sum-exp bounds give
\begin{equation}\notag
	\max_jf_j+\tau\log2
	\le G_\tau
	\le\max_jf_j+\tau\log(2(n+1)).
\end{equation}
After decreasing $\tau$ so that $1-\tau\log(2(n+1))>0$, set
\[
\underline\rho_\tau=\sqrt{1-\tau\log(2(n+1))}.
\]
Then
\begin{equation}\notag
	\bigcap_j B_{\underline\rho_\tau}(c_j)
	\subset \Omega_\tau
	\subset\bigcap_j B_{\rho_\tau}(c_j)
	\subset \Omega.
\end{equation}
Set $d_\eta:=\max_j|x_\eta-c_j|<1$, define
\[
\vartheta_\tau=\frac{1-\underline\rho_\tau}{1-d_\eta},
\qquad
T_\tau x=(1-\vartheta_\tau)x+\vartheta_\tau x_\eta,
\]
it follows that
\begin{equation}\label{eq:homothetic-sandwich}
	T_\tau(\overline \Omega)
	\subset \bigcap_j \overline{B_{\underline\rho_\tau}(c_j)}
	\subset\overline \Omega_\tau
	\subset\overline \Omega.
\end{equation}

We retain the original coordinate origin, which is the pole of
$u_*(x)=(3a)^a|x|^{2-n}$.  With the common interior point $x_{\eta_*}$ as radial
center, let $\gamma_\tau$ and $\gamma_0$ be the Minkowski functionals of
$\Omega_\tau-x_{\eta_*}$ and $\Omega-x_{\eta_*}$.

By \eqref{eq:homothetic-sandwich}, there exists $0<r_*<R_*$ such that for sufficient small $\tau>0$ such that $$B_{r_*}(x_{\eta_*})\subset \Omega_\tau\subset \Omega \subset B_{R_*}(x_{\eta_*})$$
which give uniform positive bounds and Lipschitz bounds for these homogeneous convex functions on every fixed annulus.  Hausdorff convergence implies
\[
\gamma_\tau\longrightarrow\gamma_0
\quad\text{locally uniformly on }\R^n\setminus\{0\}.
\]

At a differentiability point $x$ of $\gamma_0$, take any convergent
subsequence of $\nabla\gamma_\tau(x)$.  Passing to the limit in
\[
\gamma_\tau(y)\ge
\gamma_\tau(x)+\nabla\gamma_\tau(x)\cdot(y-x)
\]
shows that the limit belongs to
$\partial\gamma_0(x)=\{\nabla\gamma_0(x)\}$.  Hence the gradients converge
almost everywhere.  The common Lipschitz bound and dominated convergence
then give
\[
\gamma_\tau\to\gamma_0
\quad\text{in }W^{1,r}_{\mathrm{loc}}(\R^n\setminus\{0\})
\qquad(1\le r<\infty).
\]

Write the radial graphs as
\[
\partial \Omega=\{x_{\eta_*}+R_0(\theta)\theta:\theta\in S^{n-1}\},\qquad
\partial \Omega_\tau
=\{x_{\eta_*}+R_\tau(\theta)\theta:\theta\in S^{n-1}\}.
\]
Since $R_\tau=1/\gamma_\tau$ on the sphere,
\begin{equation}\label{eq:radial-convergence}
	R_\tau\longrightarrow R_0
	\quad\text{in }W^{1,r}(S^{n-1})
	\qquad(1\le r<\infty).
\end{equation}
Set $\beta_\tau=R_\tau/R_0$ and define $\Phi_\tau :\Omega \rightarrow \Omega_\tau$, along each ray,
\begin{equation}\notag
	\Phi_\tau(x_{\eta_*}+t\theta)
	=x_{\eta_*}+t\beta_\tau(\theta)\theta, \quad t\in [0,R_0(\theta)).
\end{equation}
The functions $\beta_\tau$ and $\beta_\tau^{-1}$ have common positive
bounds and Lipschitz bounds.  More explicitly, at almost every point, after
identifying a tangent vector with its radial and spherical components,
\begin{equation}\label{eq:radial-map-derivative}
	D\Phi_\tau
	=\beta_\tau I+\theta\otimes\nabla_S\beta_\tau,
	\qquad
	\det D\Phi_\tau=\beta_\tau^n,
\end{equation}
where $\nabla_S$ is the Levi-Civita connection on $S^{n-1}$.
The inverse has the same form with $\beta_\tau$ replaced by
$\beta_\tau^{-1}$ on the target ray.  The common bounds just noted therefore
make $\Phi_\tau:\Omega\to \Omega_\tau$ uniformly bi-Lipschitz.
Moreover, \eqref{eq:radial-convergence} gives
$\beta_\tau\to1$ and $\nabla_S\beta_\tau\to0$ in $L^r(S^{n-1})$ ($1\leq r<\infty$);
hence $D\Phi_\tau\to I$ in $L^r(\Omega)$ ($1\leq r<\infty$).

Pulling the Dirichlet energy back to $\Omega$, set
\begin{equation}\notag
	A_\tau=|\det D\Phi_\tau|\,
	D\Phi_\tau^{-1} (D\Phi_\tau^{-1})^T,
	\qquad J_\tau=J_{\partial\Phi_\tau}.
\end{equation}
These matrices are uniformly bounded and elliptic, while $J_\tau$ has common
positive upper and lower bounds.  The radial-graph area formula is
\[
J_\tau(x_{\eta_*}+R_0(\theta)\theta)=
\frac{R_\tau^{\,n-2}
	\sqrt{R_\tau^2+|\nabla_SR_\tau|^2}}
{R_0^{\,n-2}
	\sqrt{R_0^2+|\nabla_SR_0|^2}},
\]
and hence, for all $1\leq r<\infty$,
\begin{equation}\label{eq:coefficient-convergence}
	A_\tau\to I\quad\text{in }L^r(\Omega),
	\qquad
	J_\tau\to1\quad\text{in }L^r(\partial \Omega).
\end{equation}
Indeed, the assertions for $A_\tau$ follow directly from
\eqref{eq:radial-map-derivative}: its entries are rational expressions in
$\beta_\tau$ and $\nabla_S\beta_\tau$, with denominators bounded away from
zero.  The displayed area formula gives the corresponding boundary
statement. 

In order to obtain the compactness of resolvents of problem \eqref{eq:wang-problem-Omega}, let us recall some facts about Robin boundary problem related to our problems.
\begin{proposition}\label{prop:Robin-bdy-g-Omega}
	Suppose that $U \subset \mathbb{R}^n$ is a bounded domain with Lipschitz boundary. Then for any $g\in C(\partial U)$, the Robin boundary problem
	\begin{equation}\label{prob:Robin-Omega}
		\begin{cases}
			\Delta u=0, & \text{in } U,\\
			\partial_\nu u +a u=g &\text{on } \partial U,
		\end{cases}
	\end{equation}
	admits a unique weak solution $u\in H^1(U)$. Moreover, there exists $\alpha \in (0,1)$ and $C>0$, both only dependent of $n$ and $U$ such that
	\begin{align}
		\|u\|_{H^1(U)} &\leq C \|g\|_{L^\infty(\partial U)} \label{eqn:Flat-5-1} \\
		\|u\|_{L^\infty(U)} &\leq \frac{1}{a} \|g\|_{L^\infty(\partial U)} \label{eqn:Flat-5-2}\\
		\|u\|_{C^{0,\alpha}(\overline{U})} &\leq C \|g\|_{L^\infty(\partial U)} \label{eqn:Flat-5-3}
	\end{align}
\end{proposition}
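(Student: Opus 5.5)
Existence and uniqueness will come from Lax--Milgram, and the three estimates will be proved in turn: the energy bound by testing with $u$ and a trace inequality, the $L^\infty$ bound by a Stampacchia truncation argument, and the H\"older bound from De Giorgi--Nash--Moser theory for the Robin problem on Lipschitz domains.

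\emph{Existence, uniqueness and \eqref{eqn:Flat-5-1}.} On $H^1(U)$ consider the bilinear form
\[
\mathcal B(u,\varphi)=\int_U\nabla u\cdot\nabla\varphi+a\int_{\partial U}u\varphi .
\]
It is bounded by the trace theorem on Lipschitz domains. It is coercive because of the Friedrichs-type inequality $\|u\|_{L^2(U)}^2\le C\bigl(\|\nabla u\|_{L^2}^2+\|u\|_{L^2(\partial U)}^2\bigr)$, which follows from the standard compactness argument using Rellich's theorem and the connectedness of $U$. The functional $\varphi\mapsto\int_{\partial U}g\varphi$ is bounded by $C\|g\|_{L^2(\partial U)}\|\varphi\|_{H^1}\le C\|g\|_{L^\infty(\partial U)}\|\varphi\|_{H^1}$. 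Lax--Milgram then gives a unique weak solution, and testing the equation with $u$ gives \eqref{eqn:Flat-5-1}.

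\emph{Maximum bound \eqref{eqn:Flat-5-2}.} Let $k=\|g\|_{L^\infty(\partial U)}/a$ and test with $(u-k)^+\in H^1(U)$. This gives
\[
\int_U|\nabla(u-k)^+|^2+\int_{\partial U}\bigl(au-g\bigr)(u-k)^+=0 .
\]
On the set where $(u-k)^+>0$ we have $au-g\ge ak-g\ge0$, so the boundary integrand is at least $a\bigl((u-k)^+\bigr)^2$. Hence both terms vanish, so $(u-k)^+$ is constant with zero trace, i.e. $(u-k)^+=0$. The same argument applied to $-u$ gives $u\ge-k$, and \eqref{eqn:Flat-5-2} follows.

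\emph{H\"older bound \eqref{eqn:Flat-5-3}; this is the main step.} Interior H\"older (indeed smooth) regularity holds because $u$ is harmonic. Near the boundary I would flatten $\partial U$ locally with a bi-Lipschitz chart $(x',x_n)\mapsto(x',x_n-\psi(x'))$, where $\psi$ is Lipschitz. The equation becomes a divergence-form equation $\diver(A\nabla w)=0$ with bounded measurable, uniformly elliptic coefficients $A$ on a half-ball, with conormal boundary condition $A\nabla w\cdot e_n=\tilde g-\tilde a w$ on the flat part. Here $\tilde g$ and $\tilde a$ are bounded, with bounds depending only on $\|g\|_\infty$ and the Lipschitz constant of $\psi$. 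By \eqref{eqn:Flat-5-2}, the right-hand side $\tilde g-\tilde a w$ is bounded by $C\|g\|_\infty$, so $w$ solves a Neumann problem with $L^\infty$ data. I would then invoke the boundary De Giorgi--Nash--Moser estimate for conormal problems, for example by even reflection across $x_n=0$. Reflection produces an equation on the full ball with bounded measurable coefficients and a boundary measure term $\mu=\tilde h\,\mathcal H^{n-1}\llcorner\{x_n=0\}$. This measure satisfies the Morrey-type bound $|\mu|(B_r)\le C r^{n-1}$, which lies in the admissible class for H\"older continuity, since the data lie in $L^\infty\subset L^{p}$ with $p>n-1$ on the $(n-1)$-dimensional boundary. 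This yields
\[
[w]_{C^{0,\alpha}}\le C\bigl(\|w\|_{L^2}+\|g\|_\infty\bigr)
\]
with $\alpha$ and $C$ depending only on $n$ and the Lipschitz character of $U$. Covering $\partial U$ by finitely many such charts and combining with \eqref{eqn:Flat-5-1} and \eqref{eqn:Flat-5-2} gives \eqref{eqn:Flat-5-3}. The delicate point is checking that the constants depend only on the Lipschitz character. Since that is exactly what will be needed later for the uniform estimates along the family $\Omega_\tau$, I would state this dependence explicitly; alternatively, I would cite Nittka's or Daners' results on Robin problems on Lipschitz domains.
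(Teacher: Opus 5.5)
Your proposal is correct and follows the paper's route. Existence and uniqueness come from standard (Lax--Milgram) theory, and the $H^1$ bound comes from testing with $u$ plus a trace--Poincar\'e inequality. The $L^\infty$ bound comes from testing with $(\pm u - a^{-1}\|g\|_{L^\infty(\partial U)})_+$, and the H\"older bound from boundary De Giorgi--Nash--Moser theory for Robin problems on Lipschitz domains, which the paper simply cites as Nittka's Theorem~3.14. Your flattening-plus-even-reflection sketch is a valid self-contained substitute for that citation, since the reflected boundary term is the divergence of a bounded vector field. Your explicit tracking of the dependence on the Lipschitz character is exactly what the later uniform estimate along the family $\Omega_\tau$ requires.
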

The existance and uniqueness of weak solutions to \eqref{prob:Robin-Omega} follows from classic elliptic theory. Recall the weak equation of Robin boundary problem 
\begin{align*}
	\int_{U} \nabla u \nabla v+ a\int_{\partial U} u v 
	-\int_{\partial U} g v=0, \quad \forall v\in H^1(U).
\end{align*}
Testing with $v=u$ and using the Young's inequality and trace-Poincare inequality, we obtain \eqref{eqn:Flat-5-1}. Testing with $v=(u-a^{-1}\|g\|_{L^\infty(\partial U)})_+$ and $v=(-u-a^{-1}\|g\|_{L^\infty(\partial U)})_+$ also gives \eqref{eqn:Flat-5-2}. Finally, \eqref{eqn:Flat-5-3} follows from \cite[Theorem~3.14]{Nittka2011}.

Let $X:=C(\partial \Omega)$. Define the operator $\mathcal{R}_\tau :X \rightarrow X$ as follows: Given $g\in X$, solve
\begin{equation}\notag
	\begin{cases}
		\Delta \omega=0, & \text{in } \Omega_\tau,\\
		\partial_\nu w+a w=g\circ\Phi_\tau^{-1}, &\text{on } \partial \Omega_\tau,
	\end{cases}
\end{equation}
and obtain the solution $\omega_\tau$; write $v_\tau=w_\tau\circ\Phi_\tau$ and let $\mathcal{R}_\tau (g):= v_\tau |_{\partial \Omega}$. For $\tau=0$, define it directly on
$\Omega$. By Proposition \ref{prop:Robin-bdy-g-Omega}, $\mathcal{R}_\tau$ is well-defined. We will show that $\mathcal{R}_\tau$ are compact linear operators for sufficiently small $\tau$.

\begin{lemma}\label{lem:robin-continuity}
	There exist $\alpha\in (0,1)$ and $C>0$ such that for all $\tau \in [0,\mu_0]$ ($\mu_0$ comes from Proposition \ref{prop:softmax-geometry})
	\begin{align}\label{eq:uniform-holder}
		\|v_\tau\|_{H^1(\Omega)} + \|v_\tau\|_{L^\infty(\Omega)}+ \|v_\tau\|_{C^{0,\alpha}(\overline{\Omega})} \leq C \|g\|_{L^\infty(\partial \Omega)}
	\end{align}
	Moreover, there holds
	\begin{enumerate}
		\item[(i)] $\mathcal{R}_\tau$ are compact for all $\tau\in [0,\mu_0]$;
		\item[(ii)] If $\tau_k \rightarrow 0$ and $g_k \rightarrow g$ in $X$, then 
		\begin{align}\label{eq:robin-operator-convergence}
			\mathcal{R}_{\tau_k} g_k\rightarrow \mathcal{R}_0 g \quad \text{in }X;
		\end{align}
		\item[(iii)] If $\tau_k \rightarrow 0$ and $g_k$ is bounded in $X$, then $\{ \mathcal{R}_{\tau_k}g_k\}$ has a convergent subsequence in $X$.
	\end{enumerate}
\end{lemma}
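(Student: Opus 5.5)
The plan is to prove the uniform estimate \eqref{eq:uniform-holder} first, by applying Proposition~\ref{prop:Robin-bdy-g-Omega} on each $\Omega_\tau$ and transporting through the uniformly bi-Lipschitz maps $\Phi_\tau$. Parts (i)--(iii) will then follow from Arzel\`a--Ascoli together with a weak-limit identification that uses \eqref{eq:coefficient-convergence}.

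\textbf{Step 1 (uniform bounds).} The $L^\infty$ bound is immediate: \eqref{eqn:Flat-5-2} on $\Omega_\tau$ gives $\|w_\tau\|_{L^\infty}\le a^{-1}\|g\|_{L^\infty(\partial\Omega)}$, and composition with $\Phi_\tau$ preserves sup norms. For the $H^1$ bound, I test the weak equation on $\Omega_\tau$ with $w_\tau$ itself, which gives
\[
\int_{\Omega_\tau}|\nabla w_\tau|^2+a\int_{\partial\Omega_\tau}w_\tau^2\le \|g\|_\infty\int_{\partial\Omega_\tau}|w_\tau|.
\]
The boundary areas are uniformly bounded, since $J_\tau$ is bounded and $|\partial\Omega|<\infty$. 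Combined with the $L^\infty$ bound, this yields a uniform bound on $\|\nabla w_\tau\|_{L^2}$, and the uniform bi-Lipschitz bounds transfer it to $v_\tau$. The H\"older bound is the main obstacle, because the constant in \eqref{eqn:Flat-5-3} depends on $U$. I would first look for a uniform version of Nittka's De Giorgi estimate, noting that its constants depend only on the Lipschitz character of $U$. All $\Omega_\tau$ are convex, contain $B_{r_*}(x_{\eta_*})$ and lie in $B_{R_*}(x_{\eta_*})$, so they are star-shaped with respect to a fixed ball. Their Lipschitz character is therefore uniformly controlled: the cone condition has fixed radius and aperture. Alternatively, I would pull back to the fixed domain $\Omega$ and observe that $v_\tau$ weakly solves
\[
\int_\Omega A_\tau\nabla v_\tau\cdot\nabla\varphi+a\int_{\partial\Omega}J_\tau v_\tau\varphi=\int_{\partial\Omega}J_\tau g\varphi.
\]
This is a Robin problem with uniformly elliptic bounded measurable coefficients, and boundary coefficient and datum uniformly in $L^\infty$. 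Nittka's theorem covers exactly this class, with constants depending only on ellipticity bounds and $\Omega$. This gives a uniform $\alpha$ and $C$. Once $v_\tau$ is H\"older continuous, so is $w_\tau=v_\tau\circ\Phi_\tau^{-1}$, which I note for completeness.

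\textbf{Step 2 (compactness, (i) and (iii)).} By \eqref{eq:uniform-holder}, $\mathcal R_\tau$ maps bounded sets of $X$ into bounded sets of $C^{0,\alpha}(\partial\Omega)$, and these are precompact in $C(\partial\Omega)$ by Arzel\`a--Ascoli. Linearity follows from uniqueness of weak solutions. This proves (i) for each fixed $\tau$, and since the constants are uniform in $\tau$, it also proves (iii).

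\textbf{Step 3 (convergence, (ii)).} By (iii), every subsequence of $\mathcal R_{\tau_k}g_k$ has a further subsequence converging in $X$. Along it, $v_{\tau_k}$ converges uniformly on $\overline\Omega$ and weakly in $H^1(\Omega)$ to some $v$. I pass to the limit in the pulled-back weak formulation for a fixed test function $\varphi\in C^1(\overline\Omega)$. For the interior term, I split $A_{\tau_k}\nabla v_{\tau_k}\cdot\nabla\varphi=\nabla v_{\tau_k}\cdot\nabla\varphi+(A_{\tau_k}-I)\nabla v_{\tau_k}\cdot\nabla\varphi$. The first piece converges by weak convergence. The second tends to $0$ by H\"older's inequality, since $\|A_{\tau_k}-I\|_{L^2}\to0$, $\nabla v_{\tau_k}$ is bounded in $L^2$, and $\nabla\varphi\in L^\infty$. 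Note that $L^2$ convergence of $A_\tau$ is not quite enough for a product of two $L^2$ factors; the boundedness of $\nabla\varphi$ is what saves this term. The boundary terms converge because $J_{\tau_k}\to1$ in $L^1(\partial\Omega)$ while $v_{\tau_k}$ and $g_k$ converge uniformly. Hence $v$ is the weak solution on $\Omega$ with datum $g$, so by uniqueness $v=\mathcal R_0g$. Since the limit is independent of the subsequence, the whole sequence converges, which proves \eqref{eq:robin-operator-convergence}.
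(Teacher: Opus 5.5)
Your proposal is correct and follows essentially the same route as the paper. Both pull the Robin problem back to the fixed domain $\Omega$ via $\Phi_\tau$, get the uniform $H^1$, $L^\infty$ and H\"older bounds from the Robin estimates behind Proposition~\ref{prop:Robin-bdy-g-Omega} (Nittka's theorem with the uniformly elliptic coefficients $A_\tau$ and boundary weight $J_\tau$), obtain (i) and (iii) by Arzel\`a--Ascoli, and identify the limit in (ii) via \eqref{eq:coefficient-convergence} and uniqueness. You also fill in details the paper leaves implicit: the direct energy test for the $H^1$ bound, and the splitting with $C^1(\overline{\Omega})$ test functions, which you should note extends to all of $H^1(\Omega)$ by density on the Lipschitz domain $\Omega$.
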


\begin{proof}
	The pulled-back weak equation is
	\begin{equation}\label{eq:pulled-robin-weak-form}
		\int_{\Omega}A_\tau\nabla v_\tau\cdot\nabla\phi
		+a\int_{\partial \Omega}J_\tau v_\tau\phi
		=\int_{\partial \Omega}J_\tau g\phi, \quad \forall \phi \in H^1(\Omega).
	\end{equation}
	Recall that $\Phi_\tau:\Omega \rightarrow \Omega_\tau$ is uniform bi-Lipschitz, $A_\tau$ are uniformly bounded and elliptic, and $J_\tau$ has common positive upper and lower bounds. By Proposition \ref{prop:Robin-bdy-g-Omega}, we can obtain \eqref{eq:uniform-holder}.
	
	Let $v_k$ correspond to $(\tau_k,g_k)$. If $g_k$ is bounded in $X$, it follows from \eqref{eq:uniform-holder} that $\{v_k\}$ is bounded in $H^1(\Omega)$ and $C^{0,\alpha}(\overline{\Omega})$. Hence there exist subsequence, for simplicity, also denoted by $v_k$ such that 
	\begin{align*}
		v_k \rightharpoonup v \quad \text{in } H^1(\Omega), \quad
		v_k \rightarrow v \quad \text{in } C(\overline{\Omega}),
		\quad \mathcal{R}_{\tau_k}g_k=v_k|_{\partial \Omega} \rightarrow v|_{\partial \Omega} \quad \text{in } C(\partial \Omega)
	\end{align*}
	Hence, (iii) follows. If $\tau$ is fixed, we can get (i).  Moreover, if $g_k \rightarrow g$ in $X$, by \eqref{eq:coefficient-convergence}, there holds, for all $\phi \in H^1(\Omega)$,
	\begin{align*}
		\int_{\Omega}\nabla v\cdot\nabla\phi
		+a\int_{\partial \Omega} v \phi
		&=\lim_{k\rightarrow \infty}\int_{\Omega}A_{\tau_k}\nabla v_k\cdot\nabla\phi
		+a\int_{\partial \Omega}J_{\tau_k} v_k\phi \\
		&=\lim_{k\rightarrow \infty} \int_{\partial \Omega} J_{\tau_k} g_k\phi
		=\int_{\partial \Omega} g \phi.
	\end{align*}
	Thus, $v|_{\partial \Omega}=\mathcal{R}_0 g$. Due to the uniquness of $v$, every subsequence has the same limit, (ii) is proved.
	
\end{proof}

\begin{proposition}\label{prop:softmax-persistence}
	Let $\mathcal{B} \subset X$ be any sufficiently small open ball centered at
	$z_*=u_*|_{\partial \Omega}$ whose closure contains neither another solution
	of \eqref{equ:Robin-K-eta} nor a constant function.  There are $\mu_1\in (0,\mu_0)$
	and $0<\varepsilon<q_*-1$ such that, for any
	\[
	0<\tau<\mu_1,\qquad  q_*-\varepsilon <q\leq q_*,
	\]
	the problem
	\begin{equation}\label{eq:unscaled-smooth-problem}
		\Delta u=0\quad\text{in }\Omega_\tau,\qquad
		\partial_\nu u+\frac1{q-1}u=u^q
		\quad\text{on }\partial \Omega_\tau
	\end{equation}
	has a positive nonconstant smooth solution $u_{\tau,q}$ such that $(u_{\tau,q}\circ \Phi_\tau) |_{\partial \Omega} \in \mathcal{B}$. The solutions may be selected so that 
	$(u_{\tau_k,q_k}\circ \Phi_\tau) |_{\partial \Omega} \rightarrow u_*|_{\partial \Omega}$ in $X$ as $(\tau_k,q_k)\to(0,q_*)$.
\end{proposition}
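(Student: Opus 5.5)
We recast the problem as a fixed-point equation on $X=C(\partial\Omega)$ and use a homotopy-invariance argument for the local Leray--Schauder degree, with $(\tau,q)$ as parameters. The Robin parameter is $\frac1{q-1}$, which varies with $q$, while $\mathcal R_\tau$ inverts $\partial_\nu+a$. So we write the boundary condition as $\partial_\nu u+au=u^q+(a-\frac1{q-1})u$. On $\partial\Omega$ define
\[
\mathcal F_{\tau,q}(z)=\mathcal R_\tau\bigl(N_q(z)\bigr),\qquad N_q(z)=(z_+)^q+\Bigl(a-\tfrac1{q-1}\Bigr)z .
\]
The composition with $\Phi_\tau^{-1}$ is built into $\mathcal R_\tau$. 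A fixed point $z=\mathcal F_{\tau,q}(z)$ gives $w$ on $\Omega_\tau$, namely the Robin solution with data $N_q(z)\circ\Phi_\tau^{-1}$. Its trace is $z\circ\Phi_\tau^{-1}$, so $w$ is a weak solution of \eqref{eq:unscaled-smooth-problem} with $w^q$ replaced by $w_+^q$.

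Since $N_q:X\to X$ is continuous and bounded on bounded sets, Lemma~\ref{lem:robin-continuity}(i) shows that $\mathcal F_{\tau,q}$ is compact. At $\tau=0$, $q=q_*$ we have $a=\frac1{q_*-1}$, and $z_*$ is a fixed point. The derivative there is $\mathcal R_0\bigl(q_*u_*^{q_*-1}\,\cdot\,\bigr)$. If $h=\mathcal R_0(q_*u_*^{q_*-1}h)$, then the harmonic extension of $h$ lies in the kernel of the linearization of \eqref{equ:Robin-K-eta}. By Corollary~\ref{cor:corner-nondegenerate} this kernel is trivial, so $h=0$ and $I-D\mathcal F_{0,q_*}(z_*)$ is injective. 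Being a compact perturbation of the identity, it is an isomorphism. Hence $z_*$ is an isolated fixed point of index $\pm1$, and for a small ball $\mathcal B$ with no other fixed points on $\overline{\mathcal B}$ we get $\deg(I-\mathcal F_{0,q_*},\mathcal B,0)=\pm1$.

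\textbf{Main step (uniform a priori exclusion on $\partial\mathcal B$).} We need $\delta>0$ (this is $\mu_1$) and $\varepsilon>0$ such that $I-\mathcal F_{\tau,q}$ has no zero on $\partial\mathcal B$ whenever $\tau<\mu_1$ and $q\in(q_*-\varepsilon,q_*]$. Suppose instead that $z_k\in\partial\mathcal B$ are fixed points with $(\tau_k,q_k)\to(0,q_*)$. The $z_k$ are bounded in $X$, so the data $N_{q_k}(z_k)$ are bounded. By Lemma~\ref{lem:robin-continuity}(iii), after passing to a subsequence, $z_k=\mathcal R_{\tau_k}N_{q_k}(z_k)\to z$ in $X$. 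Then $N_{q_k}(z_k)\to N_{q_*}(z)$ uniformly, and Lemma~\ref{lem:robin-continuity}(ii) gives $z=\mathcal R_0N_{q_*}(z)$ with $z\in\partial\mathcal B$. This $z$ is a solution of \eqref{equ:Robin-K-eta}, which is nonnegative by the maximum principle with $z_+$ in the data, hence positive. It lies on $\partial\mathcal B$, contradicting the choice of $\mathcal B$.

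The same compactness argument shows that the map $(t,z)\mapsto\mathcal F_{t\tau,\,q_*+t(q-q_*)}(z)$ is jointly continuous and compact in $z$. Here joint continuity uses (ii) and (iii) along sequences, together with continuity of $\mathcal R_\tau$ for $\tau$ fixed. Homotopy invariance then gives $\deg(I-\mathcal F_{\tau,q},\mathcal B,0)=\pm1\neq0$. So a fixed point $z_{\tau,q}\in\mathcal B$ exists.

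\textbf{Properties of the solution.} Positivity: the equation has the form $\partial_\nu w+\frac1{q-1}w=w_+^q\ge0$, so testing with $w_-$ gives $w\ge0$. The Hopf lemma and the strong maximum principle (or the Harnack inequality) then give $w>0$, which removes the $+$ sign. Smoothness: $\partial\Omega_\tau$ is analytic and the data are Hölder, so Schauder bootstrapping for the Robin problem gives $w\in C^\infty(\overline{\Omega_\tau})$. Nonconstancy: $\overline{\mathcal B}$ contains no constant function, and the trace $z_{\tau,q}$ lies in $\mathcal B$, so $w$ is not constant. Convergence: a selection with $z_{\tau_k,q_k}\to z_*$ follows by applying the same compactness argument to balls $\mathcal B_r\subset\mathcal B$ of radii $r\to0$. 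Each ball yields its own thresholds $\mu_1(r),\varepsilon(r)$, and a diagonal choice finishes the proof.

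I expect the main obstacle to be the joint continuity of $(\tau,z)\mapsto\mathcal R_\tau(\cdot)$ needed for the homotopy, in particular near $\tau=0$ with $\tau>0$. Lemma~\ref{lem:robin-continuity}(ii) is stated only for $\tau_k\to0$. For convergence $\tau_k\to\tau>0$ I would rerun the same weak-limit argument. This requires $A_{\tau_k}\to A_\tau$ and $J_{\tau_k}\to J_\tau$ in $L^r$, which follows from the analytic dependence of $\partial\Omega_\tau$ on $\tau$. Alternatively, one can avoid this entirely: fix $(\tau,q)$ near $(0,q_*)$ and use the straight-line homotopy between $\mathcal F_{0,q_*}$ and $\mathcal F_{\tau,q}$ as a compact perturbation. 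That perturbation is uniformly small on $\overline{\mathcal B}$ by the contradiction argument above, and degree is invariant under such perturbations.
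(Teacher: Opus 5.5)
Your proposal is correct and follows essentially the paper's Leray--Schauder argument: the same fixed-point map $\mathcal R_\tau\bigl(\text{nonlinearity}+(a-\tfrac1{q-1})z\bigr)$ on $X$ (the paper uses a cutoff $\psi(t)t^q$ where you use $z_+^q$), index $\pm1$ at $z_*$ from Corollary~\ref{cor:corner-nondegenerate}, and the compactness/contradiction argument via Lemma~\ref{lem:robin-continuity} to exclude zeros on $\partial\mathcal B$. The paper uses your fallback, the straight-line homotopy $(1-s)(I-\mathcal F_{0,q_*})+s(I-\mathcal F_{\tau,q})$, which avoids needing continuity of $\tau\mapsto\mathcal R_\tau$ at $\tau>0$; it then excludes zeros of this homotopy on $\partial\mathcal B$ directly, since such a zero is a convex combination of two precompact images and your sequence argument applies verbatim. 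Your version instead asserts a uniform-smallness bound on $\overline{\mathcal B}$, which is true but requires operator-norm convergence $\|\mathcal R_\tau-\mathcal R_0\|_{X\to X}\to0$ (a weak-$*$ variant of Lemma~\ref{lem:robin-continuity}(ii)), not just part (ii) as stated.
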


\begin{proof}
	Corollary~\ref{cor:corner-nondegenerate} says that $z_*$ is an isolated
	nondegenerate fixed point.  Shrink $\mathcal{B}\subset X$, as allowed in the
	statement, so that every function in its closure takes values in some
	$[m_0,M_0]\subset(0,\infty)$.  Choose a cut-off function 
	$\psi\in C_c^\infty((0,\infty))$ equal to $1$ on $[m_0,M_0]$, and let
	\[
	\chi_q(t)=
	\begin{cases}
		\psi(t)t^q,&t>0,\\
		0,&t\le0.
	\end{cases}
	\]
	Define
	the compact map on $X$
	\begin{equation}\notag
		\mathcal F_{\tau,q}(z)=
		\mathcal R_\tau\left(
		\chi_q(z)+\left(a-\frac1{q-1}\right)z
		\right).
	\end{equation}
	If there exist $\hat{z}\in \mathcal{B}$ such that $\mathcal F_{\tau,q}(\hat{z})=\hat{z}$, there exists a solution $\hat{u}$ of \eqref{eq:unscaled-smooth-problem} such that $\hat{z}=(\hat{u} \circ \Phi_\tau)|_{\partial \Omega}$.
	Moreover, by Proposition \ref{prop:Robin-bdy-g-Omega} and $\hat{z}\in \mathcal{B}$, there holds 
	\begin{align*}
		\|\hat{u}\|_{H^1(\Omega_\tau)} + \|\hat{u}\|_{L^\infty(\Omega_\tau)}+ \|\hat{u}\|_{C^{0,\alpha}(\overline{\Omega_\tau})} \leq C.
	\end{align*}
	Then classic elliptic Neumann boundary regularity on the smooth domain yields $\hat{u}\in C^\infty(\overline \Omega_\tau)$.
	Hence, the existence problem of \eqref{eq:unscaled-smooth-problem} is reduced to existence of fixed-points of $\mathcal F_{\tau,q}$.

	At $(\tau,q)=(0,q_*)$, $z_*$ is a fixed point.
	Corollary~\ref{cor:corner-nondegenerate} shows that
	$I-D_z\mathcal F_{0,q_*}(z_*)$ has trivial kernel. Thus, after shrinking $\mathcal{B}$ if needed, there holds
	\begin{equation}\label{eq:local-degree-nonzero}
		\deg(I-\mathcal F_{0,q_*},\mathcal{B},0)=\pm1
	\end{equation}
	by \cite[\S\S8.3--8.4]{Deimling1985}.
	
	We claim that there exist sufficiently small $\mu_1 \in (0,\mu_0)$ and $\varepsilon\in (0,q_*-1)$ such that for all $\tau\in (0,\mu_1)$ and $q\in (q_*-\varepsilon,q_*]$, the straight-line homotopy
	\begin{align}
		H(s,(\tau,q),z):=&(1-s)(I- \mathcal{F}_{0,q_*})(z)+s (I-\mathcal F_{\tau,q})(z) \notag \\
		=&z-\bigg((1-s) \mathcal{F}_{0,q_*}(z)+s \mathcal F_{\tau,q}(z) \bigg), \quad s\in [0,1]
		\label{eq:straight-line-homotopy}
	\end{align}
	satisfies $0\notin H(s,(\tau,q),\partial \mathcal{B})$ for $s\in [0,1]$. Otherwise
	there would be $\tau_k\to0$, $q_k\to q_*$, $s_k\in[0,1]$, and
	$z_k\in\partial \mathcal{B}$ satisfying
	\begin{equation}\label{eq:degree-homotopy}
		z_k=(1-s_k)\mathcal F_{0,q_*}(z_k)
		+s_k\mathcal F_{\tau_k,q_k}(z_k).
	\end{equation}
	Since $\{ z_k\}$ is bounded in $X$, it follows from Lemma~\ref{lem:robin-continuity} that $\{F_{0,q_*}(z_k)\}$ and $\{\mathcal F_{\tau_k,q_k}(z_k)\}$ are relatively compact in $X$ which implies $\{ z_k\}$ is also relatively compact in $X$ by \eqref{eq:degree-homotopy} . Hence, after passing to a subsequence, $z_k\to z$ and $s_k\to s$. By \eqref{eq:robin-operator-convergence}, taking limit in \eqref{eq:degree-homotopy}, we obtain $z=\mathcal F_{0,q_*}(z)$ with $z\in \partial \mathcal{B}$, which contradicts that $z_*$ is an isolated fixed point in $\mathcal{B}$.
	
	Hence, It follows from above claim that the degree for $I-\mathcal{F}_{\tau,q}$ remains \eqref{eq:local-degree-nonzero} and $\mathcal{F}_{\tau,q}$ has a fixed point for every $(\tau,q)\in (0,\mu_1)\times (q_*-\varepsilon, q_*]$. The choice of neighborhood $\mathcal{B}$ makes them positive and nonconstant.  Applying the similar process in above contradiction argument shows that solutions in $\mathcal{B}$ may be selected with traces converging to
	$z_*$ as $(\tau,q)\to(0,q_*)$.
\end{proof}

Suppose $u$ is a solution of \eqref{eq:unscaled-smooth-problem} in Proposition \ref{prop:softmax-persistence}. Then for
$1\leq r\le\rho_\tau^{-1}$ define
\begin{equation}\notag 
	\Omega_{\tau,r}=r\Omega_\tau,\qquad
	u_r(y)=r^{-\frac{1}{q-1}} \, u \left(\frac{y}{r}\right).
\end{equation}
This interval is nonempty because $0<\rho_\tau<1$.
The function $u_r$ solves
\begin{equation*}
	\Delta u=0\quad\text{in }\Omega_{\tau,r},\qquad
	\partial_\nu u+ \lambda u=u^q
	\quad\text{on }\partial \Omega_{\tau,r},
\end{equation*}
where 
\begin{align*}
	\lambda=\frac{1}{r}\cdot \frac{1}{q-1}.
\end{align*}
Note that $\frac{1}{r}\in [\rho_\tau,1]$.
Proposition~\ref{prop:softmax-geometry}
therefore gives
\[
\min_i \kappa_i(\partial\Omega_{\tau,r})
=\frac{1}{r}\min_i \kappa_{i}(\partial\Omega_{\tau})
>\frac1{r\rho_\tau}\ge1.
\]
The sharp Steklov estimate under nonnegative sectional curvature
\cite[Theorem~2]{XiaXiong2024} gives in addition
\[
\sigma_1(\Omega_{ \tau,r})
>\frac1{r\rho_\tau}\geq 1.
\]
Hence, given $\bar{\tau}\in (0,\mu_1)$, taking $\varepsilon$ in Proposition \ref{prop:softmax-persistence} and $\delta:=\frac{1}{2}(\rho_{\bar{\tau}}+1)$, for every
$q_*-\varepsilon<q\leq q_*$ and $\frac{1}{q-1}\cdot\delta <\lambda \leq \frac{1}{q-1}$, we choose $\Omega_*=\Omega_{\bar{\tau},1/(\lambda(q-1))}$ and Theorem~\ref{thm:flat-main} is proved.

\vspace{1cm}

\noindent \textbf{Disclosure on AI assistance.} The authors used AI-assisted tools, principally ChatGPT 5.6.
The authors verified and completed all mathematical arguments, and take full responsibility for its content.


\end{document}